\definecolor{cobalt}{RGB}{61,99,181}
\newtheorem{thm}{Theorem}[section]
\newtheorem{cor}[thm]{Corollary}
\newtheorem{lem}[thm]{Lemma}
\newtheorem{prop}[thm]{Proposition}
\numberwithin{equation}{section}
\def\D{\mathbb{D}}
\newcommand{\N}{\mathbb{N}}
\newcommand{\C}{\mathbb{C}}
\newcommand{\Rmnum}[1]{\expandafter\@slowromancap\romannumeral #1@}
\begin{document}
\title[Hyponormal dual Toeplitz operators]{Hyponormal dual Toeplitz operators on the orthogonal complement of the Harmonic Bergman space}

\author[Chongchao Wang]{Chongchao Wang\textsuperscript{1}}
\address{\textsuperscript{1} College of Mathematics and Statistics, Chongqing University, Chongqing, 401331, P. R. China}
\email{chongchaowang@cqu.edu.cn}
\author[Xianfeng Zhao]{Xianfeng Zhao\textsuperscript{2}}
\address{\textsuperscript{2} College of Mathematics and Statistics, Chongqing University, Chongqing, 401331, P. R. China}
\email{xianfengzhao@cqu.edu.cn}

\date{\today}

\keywords{dual Toeplitz operator; harmonic Bergman space; normal;  hyponormal.}

\subjclass[2010]{47B20, 47B32, 47B35}

\begin{abstract}
In this paper, we  mainly  study the  hyponormality of  dual Toeplitz operators on the orthogonal complement of the harmonic Bergman space. First we show that the dual Toeplitz operator with bounded symbol  is hyponormal if and only if it is normal. Then we obtain a necessary and sufficient condition for the dual Toeplitz operator $S_\varphi$ with the symbol $\varphi(z)=az^{n_1}\overline{z}^{m_1}+bz^{n_2}\overline{z}^{m_2}$ ($n_1,n_2,m_1,m_2\in \mathbb N\ \text{and}\ a,b\in\C)$ to be hyponormal. Finally, we show that the rank of the commutator of two dual Toeplitz operators must be an even number if the commutator has a  finite rank.
\end{abstract}

\maketitle

\section{Introduction}

A bounded linear operator $T$ on some  Hilbert space $\mathscr{H}$ is called hyponormal if the commutator  $T^*T-TT^*$ of $T^*$ (the adjoint of $T^*$) and $T$ is positive, i.e.,
$$\langle (T^*T-TT^*)f,f\rangle\geqslant 0$$
for all $f\in \mathscr{H}$. This  definition
was introduced by  Halmos in 1950 and generalizes the concept of a normal operator (where $T^*T = TT^*$). Normal operators are completely understood by people. In fact, it is possible to give a model for an arbitrary normal operator, see \cite[Chapter 2]{Arv} if necessary. The class of hyponormal operators is an important class of non-normal
operators. The problem ``when a hyponormal operator is normal" has been investigated  by many authors. The theory of normal and hyponormal operators is an extensive and highly developed area, which has made important contributions to a number of problems in
functional analysis, operator theory, analytic  function  theory and mathematical physics.

 Normality and hyponormality of Toeplitz operators on analytic or harmonic function spaces have captured people's attention for a long time. Here, for the  basic knowledge about  Toeplitz operators on the Hardy space and on the Bergman space we refer to the books \cite{Dou} and \cite{Zhu}. The normal Toeplitz operator on the Hardy space was characterized by Brown and Halmos \cite{BH} in 1964. For the classical results about  hyponormal Toeplitz operators on the Hardy space, one can consult \cite{CC} for the case of bounded symbols and \cite{FL}, \cite{HKL}, \cite{NT}, \cite{Yu, Zhu1} for the case of  (trigonometric)  polynomial symbols. The hyponormality of Toeplitz operators with some harmonic polynomial symbols on the Bergman space was studied in \cite{CuC}, \cite{HL, H} and \cite{LuS}. Recently, a sufficient condition for the Toeplitz operator with the symbol $|z|^n+c|z|^s\ (n, s \in \mathbb N, c\in \mathbb C)$ to be hyponormal on  certain weighted Bergman space was established in \cite{LS}. Furthermore, hyponormality of block Toeplitz operators on the vector-valued Hardy space was investigated in \cite{CHL}. Concerning Toeplitz operators on the harmonic function space,  Choe and Lee  obtained  a complete characterization for Toeplitz operators with bounded harmonic symbols to be normal on the harmonic Bergman space \cite{ChL}.

The purpose of this paper is to  describe the  normality and hyponormality of dual Toeplitz operators on the orthogonal complement of the harmonic Bergman space. We will  elaborate on this class of operators in the following.

Let $\D$ be the unit disk in the complex plane $\C$ and $dA=\frac{1}{\pi}dxdy$ be the normalized area measure on $\D$.  $L^2(\D,dA)$ denotes the space of the Lebesgue measurable functions on $\D$ with the norm
$$\|f\|=\left(\int_{\D}|f(z)|^2dA(z)\right)^{\frac{1}{2}}<+\infty.$$
Clearly, $L^2(\mathbb D)=L^2(\D,dA)$ is a Hilbert space with the inner product
$$\langle f,g\rangle=\int_{\D}f(z)\overline{g(z)}dA(z).$$

The Bergman space $L_a^2$ is the closed subspace of $L^2(\D)$ consisting of all analytic functions on $\D$.
Similarly, the harmonic Bergman space $L_h^2$ is the closed subspace of $L^2(\D)$ consisting of all harmonic functions on $\D$. Let $P$ and $Q$ be the orthogonal projections from $L^2(\D)$ onto $L_a^2$ and $L_h^2$, respectively. Then $P$ and $Q$ have the following relationship:
\begin{align}\label{Q}
Q(f)=P(f)+\overline{P(\overline{f})}-P(f)(0), \ \ \ \ \ f\in L^2(\mathbb D).
\end{align}

 According to the decomposition  $L^2(\mathbb D, dA)=L_h^2\oplus (L_h^2)^{\bot}$, the multiplication operator $M_\varphi$ with symbol $\varphi\in L^\infty (\mathbb D)$  can be represented as
$$ M_\varphi= \left (\begin{matrix}
   T_\varphi & H_{\overline{\varphi}}^*\\
   H_\varphi & S_\varphi \\
  \end{matrix}\right ),
  $$
where the operator $$S_\varphi u=(I-Q)(\varphi u)$$  is   bounded  and linear  on the Hilbert space $(L_h^2)^{\bot}$. We call $S_\varphi$ the dual Toeplitz operator with
symbol $\varphi$ on  $(L_h^2)^{\bot}$. From the  definition of $S_\varphi$, the following elementary  properties about dual Toeplitz operators are easily verified:\\
$\mathrm{(a)} \ S_\varphi^*=S_{\overline{\varphi}}$;\\
$\mathrm{(b)} \ \|S_\varphi\|\leqslant \|\varphi\|_\infty$;\\
$\mathrm{(c)}\  S_{\alpha \varphi+\beta \psi}=\alpha S_\varphi+\beta S_\psi$ for all $\varphi, \psi\in L^\infty(\mathbb D)$ and all constants $\alpha, \beta\in \mathbb C$.

Unlike the Bergman space and the harmonic Bergman space, the orthonormal basis for the Hilbert  space $(L_h^2)^{\bot}$ can not  be written explicitly. But we are able to study dual Toeplitz operators on this function space
via the functions of the form $z^n\overline{z}^m$ with $m, n \in \mathbb N$. Observe that
$$\mathcal M=\mathrm{span}\Big\{(I-Q)(z^n\overline{z}^m): m, n=1, 2, \cdots\Big\}$$
is a dense subspace of $(L_h^2)^{\bot}$. Using (\ref{Q}), then elementary calculations give us that
\begin{align}\label{I-Q}
\begin{split}
(I-Q)(z^n\overline{z}^m)=\begin{cases}
\mathlarger{z^n\overline{z}^m-\frac{m-n+1}{m+1}\overline{z}^{m-n},~~~~~~~~m>n,}\vspace{2.5mm}\\
\mathlarger{z^n\overline{z}^m-\frac{n-m+1}{n+1}z^{n-m},~~~~~~~~m \leqslant n.}
\end{cases}
\end{split}
\end{align}
Moreover,  it is easy to check that
\begin{equation}\label{1.a}
\langle z^{n}\overline{z}^{m},z^{k}\overline{z}^{l}\rangle=
\begin{cases}
\mathlarger{\frac{2}{m+n+k+l+2},\ \ \ \ n-m=k-l},\vspace{2mm}\\
0,\ \ \ \ \ \ \   \ \ \ \ \ \ \ \ \ \ \  \ \ \ \ \ \ \ \ \ \ \ \text{otherwise},\vspace{2mm}
\end{cases}
\end{equation}
where $(n,m)$ and $(k,l)$ are  two  pairs  of non-negative integers.

 Indeed, the concept of ``\emph{dual Toeplitz operator}" was first introduced by Stroethoff  and  Zheng \cite{SZ1}. The algebraic and spectral properties of dual Toeplitz operator on the orthogonal complement of the Bergman space space $(L_a^2)^{\bot}$ were characterized in \cite{SZ2}. Ding, Wu and the second author studied the invertibility and spectral properties of dual Toeplitz operators on $(L_a^2)^{\bot}$. Moreover, they obtained a necessary and sufficient condition for dual Toeplitz operators with bounded harmonic symbols to be hyponormal on  $(L_a^2)^{\bot}$, see \cite{DWZ} if necessary.
 In addition, dual Toeplitz operators on other function spaces also have been investigated by many authors in recent years, see \cite{Chen1} and  \cite{Lu1, Lu2, Yang1, Yu1} for more details.

However,  little is known about dual Toeplitz operators on the complement of the harmonic Bergman space $(L_h^2)^{\bot}$. In 2015, Yang and Lu completely solved the commuting problem of two dual Toeplitz operators on $(L_h^2)^{\bot}$ with bounded harmonic symbols \cite{YaL}. Therefore, they characterized the normal dual Toeplitz operator on $(L_h^2)^{\perp}$ with  a bounded harmonic symbol. Recently, Peng and the second author \cite{PeZ} characterized the boundedness, compactness, spectral structure and  algebraic  properties of dual Toeplitz operators on  $(L_h^2)^{\perp}$.  As far as we know,  the researches on the normality and hyponormality of dual Toeplitz operators with  non-harmonic symbols on $(L_h^2)^{\bot}$  have  not been reported.

In this paper, we first show that the dual Toeplitz operator on $(L_h^2)^{\bot}$ with bounded symbol is hyponormal if and only if it is normal, see Theorem \ref{2.b}. Moreover, we obtain a necessary and sufficient condition for the dual Toeplitz operator $S_\varphi$ with non-harmonic symbol of the form $\varphi(z)=az^{n_1}\overline{z}^{m_1}+bz^{n_2}\overline{z}^{m_2}$ to be hyponormal on  $(L_h^2)^{\bot}$, where $n_1,n_2,m_1,m_2$ are nonnegative integers and $a, b$ are complex constants, see Theorem \ref{M2} for the details. Moreover, we consider the rank of the commutator of two dual Toeplitz operators with general bounded symbols in Section \ref{4}. We show  that the rank of the commutator of two dual Toeplitz operators must be an even number if the commutator has a finite rank, see Theorem \ref{M3} in the last section.

\section{Preliminaries}
In this section, we will show that the  normality  and  hyponormality  for dual Toeplitz operator on $(L_h^2)^\perp$ are equivalent.
To do so, we need the following well-known result about self-adjoint operators (see \cite{Con} if needed).
\begin{lem}\label{2.a}
Suppose that $T$ is a self-adjoint operator on some  Hilbert space $\mathscr{H}$ and
$$\langle Th,h\rangle=0$$
for all $h\in \mathscr{H}$.
Then
$$\langle Tf,g\rangle=0$$
for any $f,g\in \mathscr{H}$, i.e., $T=0$.
\end{lem}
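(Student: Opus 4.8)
The plan is to recover the vanishing of all off-diagonal inner products $\langle Tf, g\rangle$ from the vanishing of the diagonal quadratic form by a polarization argument, using self-adjointness to separate real and imaginary parts. First I would fix arbitrary $f, g \in \mathscr{H}$ and apply the hypothesis to the vector $f + g$:
$$0 = \langle T(f+g), f+g\rangle = \langle Tf, f\rangle + \langle Tf, g\rangle + \langle Tg, f\rangle + \langle Tg, g\rangle.$$
The first and last terms vanish by hypothesis, and since $T = T^*$ we have $\langle Tg, f\rangle = \langle g, Tf\rangle = \overline{\langle Tf, g\rangle}$; hence the identity collapses to $\langle Tf, g\rangle + \overline{\langle Tf, g\rangle} = 0$, i.e. $\mathrm{Re}\,\langle Tf, g\rangle = 0$ for all $f, g \in \mathscr{H}$.

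To get the imaginary part I would rerun the same computation with $g$ replaced by $ig$ (still an arbitrary vector of $\mathscr{H}$), obtaining $\mathrm{Re}\,\langle Tf, ig\rangle = 0$. Since $\langle Tf, ig\rangle = \overline{i}\,\langle Tf, g\rangle = -i\,\langle Tf, g\rangle$, this says $\mathrm{Im}\,\langle Tf, g\rangle = 0$. Combining the two gives $\langle Tf, g\rangle = 0$ for all $f, g \in \mathscr{H}$, and then choosing $g = Tf$ yields $\|Tf\|^2 = 0$ for every $f$, so $T = 0$.

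There is no real obstacle here: this is the classical fact that a (self-adjoint) operator whose quadratic form vanishes identically is the zero operator. The only point worth flagging is that one cannot stop after $\mathrm{Re}\,\langle Tf, g\rangle = 0$ — the substitution $g \mapsto ig$ (equivalently, invoking the full complex polarization identity $4\langle Tf, g\rangle = \sum_{k=0}^{3} i^{k}\,\langle T(f + i^{k} g), f + i^{k} g\rangle$, each summand of which is a diagonal term and hence vanishes under the hypothesis) is exactly what supplies the missing imaginary part. In the complex setting the hypothesis $T = T^*$ is not strictly needed for this last route; it merely makes the short real-part argument above valid over $\R$ as well and matches the form in which the lemma will be used.
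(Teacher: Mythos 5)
Your polarization argument is correct and complete: expanding $\langle T(f+g),f+g\rangle$ and using $T=T^*$ gives $\mathrm{Re}\,\langle Tf,g\rangle=0$, and the substitution $g\mapsto ig$ supplies the imaginary part, so $\langle Tf,g\rangle=0$ for all $f,g$ and hence $T=0$. The paper offers no proof of this lemma, citing it as a well-known fact from \cite{Con}, and your argument is exactly the standard one that reference supplies (your closing remark that over $\C$ the full polarization identity renders self-adjointness unnecessary is also accurate).
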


The first main result of this paper is contained in the following theorem.
\begin{thm}\label{2.b}
Suppose that $\varphi\in L^\infty(\mathbb D)$. Then the dual Toeplitz operator $S_{\varphi}$ is hyponormal on $(L_h^2)^{\perp}$ if and only if it is normal.
\end{thm}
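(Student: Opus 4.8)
The plan is to show that if $S_\varphi$ is hyponormal then its self-commutator $C := S_\varphi^*S_\varphi - S_\varphi S_\varphi^*$ is zero; since $C$ is self-adjoint, Lemma \ref{2.a} reduces this to checking $\langle Cg, g\rangle = 0$ for every $g \in (L_h^2)^\perp$. The reverse implication is trivial, so this is the whole content.

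The idea is to exploit an antiunitary symmetry. I would let $J$ be complex conjugation, $Jf = \overline f$. Because $L_h^2$ is invariant under conjugation, so is $(L_h^2)^\perp$ --- if $f\perp L_h^2$ then $\langle\overline f, h\rangle = \overline{\langle f,\overline h\rangle} = 0$ for all $h\in L_h^2$ --- so $J$ restricts to a conjugate-linear, isometric, involutive map on $(L_h^2)^\perp$, i.e. an antiunitary. Next I would check that $Q$ commutes with conjugation: $JQJ$ is an orthogonal projection with range $J(L_h^2) = L_h^2$, hence $JQJ = Q$ (alternatively this is immediate from (\ref{Q})). It follows that for $u \in (L_h^2)^\perp$,
$$JS_\varphi J\,u = \overline{(I-Q)(\varphi\,\overline u)} = (I-Q)(\overline\varphi\, u) = S_{\overline\varphi}u = S_\varphi^*u,$$
using property (a); thus $JS_\varphi J = S_\varphi^*$ and $JS_\varphi^* J = S_\varphi$.

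Then I would combine these facts. For an antiunitary $J$ one has $(JAJ)^* = JA^*J$ for every bounded $A$, so
$$JCJ = (JS_\varphi^*J)(JS_\varphi J) - (JS_\varphi J)(JS_\varphi^*J) = S_\varphi S_\varphi^* - S_\varphi^*S_\varphi = -C.$$
Also, since $J$ is antiunitary and $C$ is self-adjoint, $\langle JCJg, g\rangle = \overline{\langle CJg, Jg\rangle} = \langle CJg, Jg\rangle$ for all $g$. Hence $\langle Cg, g\rangle = -\langle CJg, Jg\rangle$ on $(L_h^2)^\perp$. If $S_\varphi$ is hyponormal then $C\geqslant 0$, so the right-hand side is $\leqslant 0$ while the left-hand side is $\geqslant 0$; therefore $\langle Cg, g\rangle = 0$ for every $g$, and Lemma \ref{2.a} gives $C = 0$, i.e. $S_\varphi$ is normal.

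The main obstacle is simply getting the two structural facts about $J$ right: that the projection $Q$ (equivalently $I-Q$, and hence $S_\varphi$ after replacing $\varphi$ by $\overline\varphi$) intertwines correctly with complex conjugation, and that conjugating an operator by an antiunitary transforms adjoints according to $(JAJ)^* = JA^*J$ --- the conjugate-linearity of $J$ both reverses products and conjugates scalars, and these effects must be bookkept consistently. Everything else is formal; in particular, no expansions in the functions $(I-Q)(z^n\overline z^m)$ are needed here.
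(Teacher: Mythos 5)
Your proposal is correct and is essentially the paper's own argument: the paper establishes the same conjugation symmetry in the equivalent form $\|S_{\varphi}f\|=\|S_{\varphi}^{*}\overline{f}\|$ and $\|S_{\varphi}^{*}f\|=\|S_{\varphi}\overline{f}\|$, deduces $\langle Cf,f\rangle=-\langle C\overline{f},\overline{f}\rangle$, and concludes via Lemma \ref{2.a} exactly as you do. Packaging the symmetry as the antiunitary $J$ with $JS_{\varphi}J=S_{\varphi}^{*}$ and $JCJ=-C$ is only a cosmetic difference.
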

\begin{proof}
Clearly, $S_{\varphi}$ is hyponormal if it is normal. Now we assume that $S_{\varphi}$ is hyponormal, i.e.,
$$ S_\varphi^* S_\varphi-S_\varphi S_\varphi^*\geqslant 0.$$
From the definition of the projection $Q$, we have
\begin{align*}
Q(\varphi f) &= P(\varphi f)+\overline{P(\overline{\varphi f})}-P(\varphi f)(0)\\
             &= \overline{\overline{P(\varphi f)}+P(\overline{\varphi f})-\overline{P(\varphi f)(0)}}\\
             &= \overline{Q(\overline{\varphi f})}
\end{align*}
and
$$Q(\overline{\varphi}f)=\overline{Q(\varphi\overline{f})}$$
for any $f\in (L_{h}^2)^\perp$.

This gives that
$$\left\|S_{\varphi}f\right\|=\left\|(I-Q)(\varphi f)\right\|=\left\|\overline{(I-Q)(\overline{\varphi f})}\right\|=\left\|S_{\varphi}^*\overline{f}\right\|$$
and
$$\left\|S_{\varphi}^*f\right\|=\left\|(I-Q)(\overline{\varphi}f)\right\|=\left\|\overline{(I-Q)(\varphi\overline{f})}\right\|=\left\|S_{\varphi}\overline{f}\right\|$$
for each $f\in (L_{h}^2)^\perp$. Thus we have
\begin{align*}
0 &\leqslant \big\langle (S_{\varphi}^*S_{\varphi}-S_{\varphi}S_{\varphi}^*)f,f\big\rangle\\
  &= \left\|S_{\varphi}f\right\|^2-\left\|S_{\varphi}^*f\right\|^2\\
  &= \left\|S_{\varphi}^*\overline{f}\right\|^2-\left\|S_{\varphi}\overline{f}\right\|^2\\
  &= -\langle (S_{\varphi}^*S_{\varphi}-S_{\varphi}S_{\varphi}^*)\overline{f},\overline{f}\rangle\\
  &\leqslant 0,
\end{align*}
where the last inequality follows from that $\overline{f}$ is also in $(L_h^2)^{\bot}$.
This implies that $$\langle (S_{\varphi}^*S_{\varphi}-S_{\varphi}S_{\varphi}^*)f,f\rangle=0$$ for all $f\in (L_{h}^2)^\perp$.
By Lemma \ref{2.a}, we have $S_{\varphi}^*S_{\varphi}-S_{\varphi}S_{\varphi}^*=0$, which means that $S_{\varphi}$ is normal. This completes the proof of Theorem \ref{2.b}.
\end{proof}

Combining the above theorem with \cite[Corollary 2.1]{YaL}, we obtain the following characterization for the hyponormal dual Toeplitz operators with bounded harmonic symbols on $(L_h^2)^{\bot}$.
\begin{cor}\label{2.e}
Suppose that $\varphi\in L^\infty(\D)$ is a harmonic function. Then $S_{\varphi}$ is hyponormal if and only if
$a\varphi+b\overline{\varphi}$ is constant on $\mathbb D$, where $a$ and $b$ are complex  constants, not both zero.
\end{cor}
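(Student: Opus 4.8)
The plan is to combine Theorem~\ref{2.b} with the harmonic-symbol commuting result of Yang and Lu. Since Theorem~\ref{2.b} tells us that $S_\varphi$ is hyponormal precisely when $S_\varphi$ is normal, it suffices to invoke the known characterization of \emph{normal} dual Toeplitz operators with bounded harmonic symbol. By \cite[Corollary~2.1]{YaL} (together with property (a), which gives $S_\varphi^*=S_{\overline\varphi}$), the operator $S_\varphi$ commutes with $S_\varphi^*=S_{\overline\varphi}$ if and only if the two dual Toeplitz operators $S_\varphi$ and $S_{\overline\varphi}$ commute, and the cited corollary says this happens exactly when a nontrivial linear combination $a\varphi+b\overline\varphi$ is constant on $\mathbb D$. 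So the entire argument is a two-line deduction.

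Concretely, first I would note that $\varphi$ harmonic implies $\overline\varphi$ harmonic, so both $S_\varphi$ and $S_{\overline\varphi}$ fall under the scope of the Yang--Lu commuting theorem. Next, using property (a) I would rewrite the normality condition $S_\varphi S_\varphi^*=S_\varphi^*S_\varphi$ as the commutativity $S_\varphi S_{\overline\varphi}=S_{\overline\varphi}S_\varphi$. Then I would apply \cite[Corollary~2.1]{YaL} directly to conclude that this commutativity is equivalent to the existence of complex constants $a,b$, not both zero, with $a\varphi+b\overline\varphi$ constant on $\mathbb D$. Finally, Theorem~\ref{2.b} upgrades ``hyponormal'' to ``normal'' at no cost, closing the loop.

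There is essentially no obstacle here, since all the real content is in Theorem~\ref{2.b} and in the externally cited result; the only point requiring a moment's care is the bookkeeping that the hypotheses of \cite[Corollary~2.1]{YaL} (harmonic, bounded) are met by both $\varphi$ and $\overline\varphi$, and that the ``commuting'' statement there is genuinely the same as ``normal'' once one identifies $S_\varphi^*$ with $S_{\overline\varphi}$ via property~(a). If one wished to make the paper self-contained one could instead recall the explicit form of the Yang--Lu condition and verify by a direct computation on the dense set $\mathcal M$, using \eqref{I-Q} and \eqref{1.a}, that $\langle(S_\varphi S_{\overline\varphi}-S_{\overline\varphi}S_\varphi)(I-Q)(z^n\overline z^m),(I-Q)(z^k\overline z^l)\rangle$ vanishes for all admissible indices exactly when $a\varphi+b\overline\varphi$ is constant; but this is exactly the computation already carried out in \cite{YaL}, so citing it is the efficient route.
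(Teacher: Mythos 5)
Your proposal is correct and is essentially identical to the paper's argument: the paper likewise derives this corollary by combining Theorem~\ref{2.b} with the Yang--Lu characterization \cite[Corollary 2.1]{YaL}, using $S_\varphi^*=S_{\overline\varphi}$ to translate normality into the commuting of $S_\varphi$ and $S_{\overline\varphi}$. Nothing further is needed.
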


We end this section by discussing the hyponormality of the special dual Toeplitz operator $S_{z^n\overline{z}^m}$ (where $m, n\in \N$) in the following proposition, which is useful for us to prove our main theorem in the next section.
\begin{prop}\label{2.c}
Let $n,m$ be two nonnegative integers, then $S_{z^n\overline{z}^m}$ is hyponormal (normal) on $(L_h^2)^{\perp}$ if and only if $n=m$.
\end{prop}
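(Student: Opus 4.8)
By Theorem \ref{2.b}, hyponormality of $S_{z^n\overline{z}^m}$ is equivalent to its normality, so it suffices to characterize when $S_{z^n\overline{z}^m}$ is normal. The sufficiency is trivial: if $n=m$, then $\varphi = |z|^{2n}$ is a bounded real-valued function, hence $S_\varphi$ is self-adjoint by property (a), and in particular normal. So the real content is the necessity direction, and the plan is to assume $S_{z^n\overline{z}^m}$ is normal (equivalently $\|S_\varphi f\| = \|S_\varphi^* f\|$ for all $f$) and derive $n=m$. I would argue by contraposition, or rather directly: assume $n \neq m$ and exhibit a single test vector $f \in \mathcal M$ on which $\|S_\varphi f\| \neq \|S_\varphi^* f\|$, where $\varphi = z^n\overline{z}^m$ and $\varphi^* = \overline\varphi = z^m\overline{z}^n$. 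By symmetry (swapping the roles of $n$ and $m$ amounts to passing to the adjoint) we may assume $n > m$.

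The key computational step is to evaluate $S_\varphi$ on a basis-type vector $(I-Q)(z^p\overline{z}^q)$ using formula (\ref{I-Q}). Concretely, I would pick a convenient simple test function — a natural first try is $f = (I-Q)(z^k)$ for a suitable power $k$ with $k \geqslant 1$ (so that $z^k \in L_h^2$ and $(I-Q)z^k = 0$ — this is the wrong choice), so instead I would take $f = (I-Q)(z\overline{z}) = z\overline{z} - \tfrac12$, or more generally $f = (I-Q)(z^{a}\overline{z}^{b})$ with both $a,b \geqslant 1$ so that $f \neq 0$. Then $\varphi f = z^{n+a}\overline{z}^{m+b}$ modulo the harmonic correction term, and I apply $(I-Q)$ again via (\ref{I-Q}); similarly for $\overline\varphi f = z^{m+a}\overline{z}^{n+b}$. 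Using the inner product formula (\ref{1.a}), the squared norms $\|S_\varphi f\|^2$ and $\|S_{\overline\varphi} f\|^2$ become explicit rational expressions in $n,m,a,b$. The cleanest route: compute $\|(I-Q)(z^p\overline{z}^q)\|^2$ in closed form from (\ref{I-Q}) and (\ref{1.a}) — it equals $\frac{2}{p+q+1} - (\text{correction})^2\cdot\frac{2}{|p-q|+1}$ depending on the sign of $p-q$ — and then note that $\|S_\varphi f\|^2 = \|(I-Q)(z^{n+a}\overline{z}^{m+b})\|^2 \cdot (\text{something})$ is not quite right because $f$ itself is not a monomial; to keep things genuinely routine I would instead test on $f = (I-Q)(z^a\overline z^b)$ and expand $\varphi f$ honestly as $z^{n+a}\overline z^{m+b} - c\, z^{n+a}\overline z^{b-?}$... — the point being that after applying $(I-Q)$ the result is a combination of two monomials whose norm is computable from (\ref{1.a}).

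The cleanest choice that avoids the correction-term bookkeeping is to let $f = (I-Q)(z^a\overline z^b)$ with $a,b$ large relative to $n,m$ and chosen so that the ``harmonic parts'' are controlled, and then observe that the discrepancy $\|S_\varphi f\|^2 - \|S_{\overline\varphi}f\|^2$ reduces to comparing two strictly decreasing functions of a parameter evaluated at points that differ precisely when $n \neq m$. Alternatively — and this is likely the slickest argument — I would compute the diagonal entries $\langle S_{\overline\varphi}S_\varphi f, f\rangle - \langle S_\varphi S_{\overline\varphi} f, f\rangle = \|S_\varphi f\|^2 - \|S_{\overline\varphi}f\|^2$ for $f = (I-Q)(z^a\overline z^b)$ and show that for suitable $a = b$ this quantity is nonzero when $n \neq m$, exploiting the asymmetry between $z^{n+a}\overline z^{m+b}$ and $z^{m+a}\overline z^{n+b}$: these have the same total degree but, after projecting off $L_h^2$, different norms because the subtracted harmonic monomial has a different degree in the two cases.

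The main obstacle I anticipate is the case analysis forced by the two branches in (\ref{I-Q}): depending on the relative sizes of $n+a$ vs.\ $m+b$ and $m+a$ vs.\ $n+b$, the correction terms differ, and one must check that the parameter $f$ can be chosen to land in a single clean regime (e.g.\ taking $a=b$ forces $n+a > m+a$ and $m+a < n+a$ simultaneously, putting the two computations in opposite branches, which is exactly where the asymmetry bites). Verifying that the resulting rational inequality is strict — rather than collapsing to an identity by some coincidence — is the crux; I expect it to come down to showing a quantity like $\frac{2}{n+m+2a+1} - \bigl(\frac{n-m+1}{n+a+1}\bigr)^2 \frac{2}{n-m+1} \neq \frac{2}{n+m+2a+1} - \bigl(\frac{n-m+1}{n+b+1}\bigr)^2\frac{2}{n-m+1}$ with $a=b$, i.e.\ ultimately that $n-m \neq 0$, which is our hypothesis. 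If the simplest test vector does not separate the norms, I would fall back to the general vector $(I-Q)(z^a\overline z^b)$ and optimize over $(a,b)$; since $\mathcal M$ is dense, normality fails as soon as \emph{some} such vector does, so we have a large family to play with.
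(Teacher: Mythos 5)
Your overall strategy is the paper's: reduce hyponormality to normality via Theorem \ref{2.b}, note the $n=m$ direction is trivial by self-adjointness, and for the converse test $\|S_\varphi f\|^2-\|S_\varphi^* f\|^2$ on vectors $f=(I-Q)(z^a\overline z^b)$ computed through (\ref{I-Q}) and (\ref{1.a}). But the proposal stops exactly where the work begins: you never actually verify that the discrepancy is nonzero, and the one concrete claim you make about what the crux reduces to is wrong. With $a=b$ the two sides of your displayed ``inequality'' are literally the same expression, and this is not a typo you can fix: for $a=b$ the monomials $z^{n+a}\overline z^{m+b}$ and $z^{m+a}\overline z^{n+b}$ are complex conjugates of one another, and since $I-Q$ commutes with conjugation their $(I-Q)$-projections have \emph{equal} norms. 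So the leading terms cancel identically, and the whole asymmetry between $S_\varphi f$ and $S_\varphi^* f$ is hidden in the lower-order correction terms coming from the harmonic part of $f$ and from $Q(\varphi f)$ versus $Q(\overline\varphi f)$. Your plan as written would therefore compute $0=0$ at leading order and you would still face the genuinely delicate question of whether the residual rational expression in $n,m,a,b$ vanishes.

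The paper resolves precisely this point with an idea absent from your proposal: it uses the one-parameter family $f_k=(I-Q)(z^k\overline z)$ for \emph{all} $k>\max\{m,n\}$, computes
\begin{equation*}
\|S_\varphi f_k\|^2-\|S_\varphi^* f_k\|^2=\frac{-n^2(n+k-m)}{(k+1)^2(n+k)^2(n+k+1)^2}+\frac{m^2(m+k-n)}{(k+1)^2(m+k)^2(m+k+1)^2},
\end{equation*}
clears denominators to get a polynomial in $k$ vanishing at infinitely many points, hence identically, and reads off the coefficient of $z^5$ to conclude $n^2-m^2=0$. (A single well-chosen $k$ can in fact be made to work via a monotonicity argument for $t\mapsto t/\bigl((t+k)(t+k+1)\bigr)$ on $[0,k]$, but that argument, or the polynomial-identity trick, is exactly the missing step.) Without either of these, ``optimize over $(a,b)$, since $\mathcal M$ is dense'' is not a proof of necessity: density guarantees that normality is detected by \emph{some} vector in $\mathcal M$ only if the operator is non-normal, which is what you are trying to establish. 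You need to exhibit and verify a specific nonvanishing, and the proposal does not do so.
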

\begin{proof}
If $n=m$, then $S_{z^n\overline{z}^m}$ is self-adjoint, and hence is hyponormal. Next, we will show  $m=n$  provided that $S_{z^n\overline{z}^m}$ is hyponormal. In view of Theorem \ref{2.b}, we suppose that $S_{z^n\overline{z}^m}$ is normal.

Let $k$ be an arbitrary positive  integer such that $k>\max\{m, n\}$. By (\ref{I-Q}) in Section 1, we have
$$f_k(z):=z^k\overline{z}-Q(z^k\overline{z})=z^k\overline{z}-\frac{k}{k+1}z^{k-1}$$
and $f_k\in (L_h^2)^{\perp}$. Thus we obtain
\begin{equation}\label{eq3}
\begin{aligned}
&S_{z^n\overline{z}^m}(f_k)\\
&=S_{z^n\overline{z}^m}\Big(z^k\overline{z}-\frac{k}{k+1}z^{k-1}\Big)\\
&=(I-Q)(z^{n+k}\overline{z}^{m+1})-\frac{k}{k+1}(I-Q)(z^{n+k-1}\overline{z}^m)\\
&=z^{n+k}\overline{z}^{m+1}-\frac{n+k-m}{n+k+1}z^{n+k-m-1}-\frac{k}{k+1}z^{n+k-1}\overline{z}^m+\frac{k(n+k-m)}{(k+1)(n+k)}z^{n+k-m-1}\\
&=z^{n+k}\overline{z}^{m+1}-\frac{k}{k+1}z^{n+k-1}\overline{z}^m-\frac{n(n+k-m)}{(k+1)(n+k)(n+k+1)}z^{n+k-m-1}.
\end{aligned}
\end{equation}
Similarly, we have
\begin{equation}\label{eq4}
\begin{aligned}
S_{z^n\overline{z}^m}^{*}(f_k)
&=S_{z^m\overline{z}^n}(f_k)\\
&=z^{m+k}\overline{z}^{n+1}-\frac{k}{k+1}z^{m+k-1}\overline{z}^n-\frac{m(m+k-n)}{(k+1)(m+k)(m+k+1)}z^{m+k-n-1}.
\end{aligned}
\end{equation}
Before computing $\|S_{z^n\overline{z}^m}(f_k)\|$ and $\|S_{z^n\overline{z}^m}^{*}(f_k)\|$, some elementary calculations are required. Direct computations give us the following equalities:
$$\|z^{n+k}\overline{z}^{m+1}\|^2=\|z^{m+k}\overline{z}^{n+1}\|^2=\frac{1}{n+k+m+2},$$
$$\|z^{n+k-1}\overline{z}^m\|^2=\|z^{m+k-1}\overline{z}^n\|^2=\frac{1}{n+k+m},$$
$$\|z^{n+k-m-1}\|^2=\frac{1}{n+k-m}, \ \ \ \ \ \|z^{m+k-n-1}\|^2=\frac{1}{m+k-n},$$
$$\left\langle z^{n+k}\overline{z}^{m+1},z^{n+k-1}\overline{z}^m\right\rangle=\left\langle z^{m+k}\overline{z}^{n+1},z^{m+k-1}\overline{z}^n\right\rangle=\frac{1}{n+k+m+1},$$
$$\left\langle z^{n+k-1}\overline{z}^m,z^{n+k-m-1}\right\rangle=\frac{1}{n+k},\ \ \ \left\langle z^{m+k-1}\overline{z}^n,z^{m+k-n-1}\right\rangle=\frac{1}{m+k},$$
$$\left\langle z^{m+k}\overline{z}^{n+1},z^{m+k-n-1}\right\rangle=\frac{1}{m+k+1}, \ \ \ \left\langle z^{n+k}\overline{z}^{m+1},z^{n+k-m-1}\right\rangle=\frac{1}{n+k+1}.$$

Combining the above equalities, we obtain that
\begin{align*}
\|S_{z^n\overline{z}^m}(f_k)\|^2&=\langle S_{z^n\overline{z}^m}(f_k), S_{z^n\overline{z}^m}(f_k)\rangle\\
&=\frac{1}{n+k+m+2}-\frac{2k}{(k+1)(n+k+m+1)}\\
&\quad-\frac{2n(n+k-m)}{(k+1)(n+k)(n+k+1)^2}+\frac{k^2}{(k+1)^2(n+k+m)}\\
&\quad+\frac{2kn(n+k-m)}{(k+1)^2(n+k)^2(n+k+1)}+\frac{n^2(n+k-m)}{(k+1)^2(n+k)^2(n+k+1)^2}
\end{align*}
and
\begin{align*}
\|S_{z^n\overline{z}^m}^*(f_k)\|^2
&=\frac{1}{m+k+n+2}-\frac{2k}{(k+1)(m+k+n+1)}\\
&\quad-\frac{2m(m+k-n)}{(k+1)(m+k)(m+k+1)^2}+\frac{k^2}{(k+1)^2(m+k+n)}\\
&\quad+\frac{2km(m+k-n)}{(k+1)^2(m+k)^2(m+k+1)}+\frac{m^2(m+k-n)}{(k+1)^2(m+k)^2(m+k+1)^2}.
\end{align*}
Since $S_{z^n\overline{z}^m}$ is normal, we have
\begin{align*}
0&=\langle (S_{z^n\overline{z}^m}^*S_{z^n\overline{z}^m}-S_{z^n\overline{z}^m}S_{z^n\overline{z}^m}^*)f_k, f_k\rangle=\|S_{z^n\overline{z}^m}(f_k)\|^2-\|S_{z^n\overline{z}^m}^*(f_k)\|^2\\
&=\frac{2kn(n+k-m)}{(k+1)^2(n+k)^2(n+k+1)}+\frac{n^2(n+k-m)}{(k+1)^2(n+k)^2(n+k+1)^2}\\
&\quad -\frac{2n(n+k-m)}{(k+1)(n+k)(n+k+1)^2}-\frac{2km(m+k-n)}{(k+1)^2(m+k)^2(m+k+1)}\\
&\quad -\frac{m^2(m+k-n)}{(k+1)^2(m+k)^2(m+k+1)^2}+\frac{2m(m+k-n)}{(k+1)(m+k)(m+k+1)^2}\\
&=\frac{-n^2(n+k-m)}{(k+1)^2(n+k)^2(n+k+1)^2}+\frac{m^2(m+k-n)}{(k+1)^2(m+k)^2(m+k+1)^2}.
\end{align*}
It follows that
\begin{equation}\label{eq1}
\frac{n^2(n+k-m)}{(k+1)^2(n+k)^2(n+k+1)^2}=\frac{m^2(m+k-n)}{(k+1)^2(m+k)^2(m+k+1)^2}
\end{equation}
for each  $k>\max\{m, n\}$.
Multiplying both sides of (\ref{eq1}) by
$$(k+1)^2(n+k)^2(n+k+1)^2(m+k)^2(m+k+1)^2$$
gives us
\begin{equation}\label{eq2}
n^2(n+k-m)(m+k)^2(m+k+1)^2=m^2(m+k-n)(n+k)^2(n+k+1)^2
\end{equation}
for every $k>\max\{m, n\}$.

Let $$p(z)=n^2(n+z-m)(m+z)^2(m+z+1)^2-m^2(m+z-n)(n+z)^2(n+z+1)^2.$$
Then $p$ is a polynomial and  $p$ has finitely many zeros in the complex plane. But (\ref{eq2}) implies that $p$ must be a zero polynomial. This yields that the coefficients of $z^5$ is zero, that is,
$$n^2-m^2=0.$$
This implies that $n=m$, hence the proof of Proposition \ref{2.c} is finished.
\end{proof}

\section{dual Toeplitz operator with the symbol $\varphi(z)=az^{n_1}\overline{z}^{m_1}+bz^{n_2}\overline{z}^{m_2}$}

This section is devoted to establishing a necessary and sufficient condition for dual Toeplitz operator with the symbol $\varphi(z)=az^{n_1}\overline{z}^{m_1}+bz^{n_2}\overline{z}^{m_2}$ to be hyponormal (normal) on the orthogonal complement of the harmonic Bergman space. In view of Proposition \ref{2.c}, we need only to consider the case of $ab\neq 0$ for the symbol $\varphi$. The main result of this section is the following theorem.
\begin{thm}\label{M2}
Let $\varphi(z)$ be the nonzero function $az^{n_1}\overline{z}^{m_1}+bz^{n_2}\overline{z}^{m_2}$, where $a, b$ are nonzero constants  and $n_1,n_2,m_1,m_2$ are positive integers. Then the dual Toeplitz operator $S_{\varphi}$  is hyponormal (normal) on $(L_h^2)^{\perp}$ if and only if one of the following conditions holds:\\
$\mathrm{(1)}$ $n_1=m_1=n_2=m_2$;\\
$\mathrm{(2)}$ $n_1=m_1,n_2=m_2,n_1\neq n_2$ and $\arg(a)=\arg(b)$;\\
$\mathrm{(3)}$ $n_1=m_2,m_1=n_2,n_1\neq m_1$ and $|a|=|b|$.
\end{thm}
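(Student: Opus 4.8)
By Theorem \ref{2.b} and Proposition \ref{2.c}, it suffices to determine when $S_\varphi$ is normal in the case $ab\neq 0$, and we may assume the two terms are genuinely distinct (otherwise we reduce to a single monomial). The plan is to compute $\|S_\varphi f\|^2-\|S_\varphi^* f\|^2$ on a sufficiently rich family of test vectors and extract polynomial identities in an integer parameter $k$, exactly as in the proof of Proposition \ref{2.c}. Writing $S_\varphi=aS_{z^{n_1}\overline{z}^{m_1}}+bS_{z^{n_2}\overline{z}^{m_2}}$ and $S_\varphi^*=\overline{a}S_{z^{m_1}\overline{z}^{n_1}}+\overline{b}S_{z^{m_2}\overline{z}^{n_2}}$, the difference $\langle(S_\varphi^*S_\varphi-S_\varphi S_\varphi^*)f_k,f_k\rangle$ expands into the two ``diagonal'' contributions already handled in Proposition \ref{2.c} (which force, term by term, structural constraints on each $(n_i,m_i)$) plus a ``cross term'' of the form $2\,\mathrm{Re}\big(a\overline{b}\,c_k\big)$ for explicit rational functions $c_k$ of $k$. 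Applying this with the test functions $f_k=(I-Q)(z^k\overline{z})$ for large $k$ as in Proposition \ref{2.c}, and also with $f_k=(I-Q)(z^k\overline{z}^{\,j})$ for a few small $j$ to get enough equations, will pin down the relations among $n_1,m_1,n_2,m_2$ and the constraint linking $a$ and $b$.

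The sufficiency direction is short and should be dispatched first. In case (1), $\varphi$ is a real scalar multiple of $|z|^{2n_1}$ up to the constant $a+b$ is irrelevant — more precisely $\varphi=(az^{n_1}\overline{z}^{n_1}+bz^{n_1}\overline{z}^{n_1})$ wait, this needs $n_1=m_1=n_2=m_2$, so $\varphi=(a+b)|z|^{2n_1}$ and $S_\varphi$ is a scalar multiple of a self-adjoint operator, hence normal. In case (2), $\varphi=a|z|^{2n_1}+b|z|^{2n_2}$; if $\arg a=\arg b$ then $\varphi=e^{i\theta}(|a|\,|z|^{2n_1}+|b|\,|z|^{2n_2})$ is $e^{i\theta}$ times a real-valued function, so $S_\varphi=e^{i\theta}S_\psi$ with $S_\psi$ self-adjoint, hence normal. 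In case (3), $\varphi=az^{n_1}\overline{z}^{m_1}+bz^{m_1}\overline{z}^{n_1}$ with $|a|=|b|$; writing $a=|a|e^{i\alpha}$, $b=|a|e^{i\beta}$ one checks that $e^{-i(\alpha+\beta)/2}\varphi$ has the symmetry $\psi(z)=\overline{\psi(z)}$ under swapping... more carefully, $\overline{\psi}=\psi$ will hold after the right unimodular rotation, so again $S_\varphi$ is a unimodular multiple of a self-adjoint operator. Thus in all three cases $S_\varphi$ is normal.

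For necessity, assume $S_\varphi$ is hyponormal, hence normal by Theorem \ref{2.b}. First I would run the $f_k=(I-Q)(z^k\overline{z})$ computation: the diagonal parts reproduce the polynomial $p(z)$ from Proposition \ref{2.c} for each index pair, and the leading ($z^5$) coefficient of the whole expression, which must vanish identically in $k$, will give $|a|^2(n_1^2-m_1^2)+|b|^2(n_2^2-m_2^2)+2\,\mathrm{Re}(a\overline b)\cdot(\text{something symmetric})=0$, while the cross term is nonzero only when the ``harmonic parts'' of the relevant products line up, i.e. only under the arithmetic coincidences $n_1-m_1=n_2-m_2$ or $n_1-m_2=n_2-m_1$ (so that the two monomials, after applying $I-Q$, have overlapping homogeneous pieces). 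A case analysis on which of these coincidences occurs — and whether $n_i=m_i$ individually — is the combinatorial heart of the argument. When no coincidence holds the cross terms vanish and the two diagonal polynomial identities force $n_1=m_1$ and $n_2=m_2$ separately (by Proposition \ref{2.c}'s argument applied to each), landing in case (1) or (2); the sign of the surviving cross term then forces $\mathrm{Re}(a\overline b)\ge 0$ together with a matching imaginary-part identity from a second family of test vectors, yielding $\arg a=\arg b$. When $n_1-m_1=n_2-m_2\neq 0$ one shows the diagonal-plus-cross identity is still only satisfiable if each pair is diagonal, a contradiction with $n_1-m_1\neq 0$, so this case is empty. The remaining case $n_1-m_2=n_2-m_1$, i.e. $n_1+m_1=n_2+m_2$, combined with the surviving equations forces $\{n_1,m_1\}=\{m_2,n_2\}$ and then the cross term's vanishing becomes the single scalar equation $|a|=|b|$ (together with an argument constraint that turns out to be automatically satisfiable), giving case (3).

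The main obstacle I anticipate is the bookkeeping in the cross-term analysis: unlike Proposition \ref{2.c}, where a single polynomial identity in $z$ sufficed, here one must track which homogeneous components of $(I-Q)(z^{n_i+k}\overline{z}^{m_i+1})$ and of its adjoint coincide, and this depends delicately on the relative sizes of $n_i,m_i,k$. To keep this manageable I would fix $k$ larger than all the $n_i,m_i$ from the outset (legitimate since the normality identity must hold for all such $k$), so that every application of $I-Q$ falls in the ``$m\le n$'' branch of \eqref{I-Q} and the monomials involved are $z^{N}\overline{z}^{M}$ with $N>M\ge 0$; then the inner-product formula \eqref{1.a} applies uniformly and the whole expression $\|S_\varphi f_k\|^2-\|S_\varphi^* f_k\|^2$ becomes a rational function of $k$ whose numerator is a polynomial that must vanish for all large integers $k$, hence identically. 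Matching a handful of top-degree coefficients of that polynomial — together with one auxiliary family of test vectors to separate real and imaginary parts of $a\overline b$ — should close every case. I would present the sufficiency first, then the $z^5$-coefficient computation, then the case split, deferring the longest algebraic verifications to a lemma.
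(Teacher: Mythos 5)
Your overall strategy for necessity --- the test vectors $f_k=(I-Q)(z^k\overline z)$, the expansion of $\|S_\varphi f_k\|^2-\|S_\varphi^* f_k\|^2$ into diagonal and cross contributions, and the observation that the resulting rational function of $k$ must vanish identically, hence is a polynomial identity --- is exactly the paper's (Lemmas \ref{2.d}, \ref{2.g}, \ref{2.h}). Your sufficiency argument is correct and essentially complete; in particular your treatment of case (3), rotating by $e^{-i(\alpha+\beta)/2}$ to exhibit a real-valued symbol, is cleaner than the paper's direct operator computation.

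The necessity half, however, has a genuine gap. By (\ref{1.a}) the cross terms $\langle S_{z^{n_1}\overline z^{m_1}}f_k,S_{z^{n_2}\overline z^{m_2}}f_k\rangle$ vanish precisely when $n_1-m_1\neq n_2-m_2$ (your second ``coincidence'' $n_1-m_2=n_2-m_1$ plays no role for this family of test vectors), and in that regime one is left with the single \emph{summed} identity (\ref{eq5}). Your assertion that this forces $n_1=m_1$ and $n_2=m_2$ ``separately, by Proposition \ref{2.c}'s argument applied to each'' is false: case (3) itself is a counterexample, since for $\varphi=z^2\overline z+z\overline z^2$ the cross terms vanish, (\ref{eq5}) holds, and neither monomial is diagonal. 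The sum can only be split termwise when $(n_1-m_1)(n_2-m_2)\geqslant 0$, via the sign argument obtained by setting $k=0$ (Lemma \ref{2.d}); when the signs are opposite an entirely different device is needed --- the paper evaluates the polynomial identity at the zeros $x=-n_1-1$ and $x=-n_2$ of the complementary factors to force $n_1=m_2$ and $n_2=m_1$, and then reads $|\alpha|=1$ off the coefficient of $x^{13}$ (Lemma \ref{2.h}). Nothing in your sketch supplies this step, and it is precisely how case (3) emerges. A second problem: your claim that a sign-of-cross-term plus imaginary-part analysis yields $\arg a=\arg b$ in case (2) cannot succeed, because normality of $S_{a|z|^{2n_1}+b|z|^{2n_2}}$ only forces $a\overline b\in\R$ (this is all that Lemma \ref{2.i} establishes; note that $\varphi=|z|^2-|z|^4$ is real-valued, so $S_\varphi$ is self-adjoint and in particular normal, yet $\arg a\neq\arg b$). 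No computation on test vectors can deliver the strict equality of arguments you announce, so that step of your plan would fail as stated.
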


As the proof of Theorem \ref{M2} is complicated, we shall divide its proof into several lemmas.

\begin{lem}\label{2.d}
Let $\varphi(z)$  be the nonzero function $z^{n_1}\overline{z}^{m_1}+\alpha z^{n_2}\overline{z}^{m_2}$, where $\alpha$ is a nonzero complex constant and $n_1,n_2,m_1,m_2$ are positive integers. Suppose that
$$(n_1-m_1)(n_2-m_2)\geqslant 0.$$
Then $n_1-m_1=n_2-m_2$ if  $S_{\varphi}$ is normal on $(L_h^2)^{\perp}$.
\end{lem}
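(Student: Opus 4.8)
The plan is to imitate the strategy of Proposition \ref{2.c}: feed the operator a one-parameter family of test vectors $f_k=(I-Q)(z^k\overline{z})$ with $k$ large, compute $\|S_\varphi f_k\|^2-\|S_\varphi^* f_k\|^2$ as a rational function of $k$, and extract information from the requirement that this difference vanish identically (since $S_\varphi$ normal means the difference is $0$ for \emph{all} $f$, in particular all $f_k$). By linearity, $S_\varphi f_k = S_{z^{n_1}\overline{z}^{m_1}}f_k + \alpha S_{z^{n_2}\overline{z}^{m_2}}f_k$, and each summand was already computed in display \eqref{eq3}: for a monomial symbol $z^n\overline{z}^m$ one gets
$$S_{z^n\overline{z}^m}f_k=z^{n+k}\overline{z}^{m+1}-\frac{k}{k+1}z^{n+k-1}\overline{z}^m-\frac{n(n+k-m)}{(k+1)(n+k)(n+k+1)}z^{n+k-m-1},$$
and similarly $S_\varphi^* f_k = S_{z^{m_1}\overline{z}^{n_1}}f_k + \overline{\alpha}\,S_{z^{m_2}\overline{z}^{n_2}}f_k$ using property (a).

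First I would expand $\|S_\varphi f_k\|^2$ and $\|S_\varphi^* f_k\|^2$ using \eqref{1.a}. The "diagonal'' contributions (the $z^{n_i}\overline{z}^{m_i}$ term against itself, $i=1,2$) already appeared in Proposition \ref{2.c} and contribute a difference that, after the polynomial-identity argument there, is controlled; the genuinely new ingredient is the \emph{cross term} $2\,\mathrm{Re}\big(\overline{\alpha}\langle S_{z^{n_1}\overline{z}^{m_1}}f_k,\,S_{z^{n_2}\overline{z}^{m_2}}f_k\rangle\big)$ and its mirror $2\,\mathrm{Re}\big(\alpha\langle S_{z^{m_1}\overline{z}^{n_1}}f_k,\,S_{z^{m_2}\overline{z}^{n_2}}f_k\rangle\big)$. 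By the orthogonality relation \eqref{1.a}, $\langle z^p\overline{z}^q,z^r\overline{z}^s\rangle\neq 0$ only when $p-q=r-s$; I would go through the nine possible monomial pairings coming from the two three-term expressions above and see which survive under the hypothesis $(n_1-m_1)(n_2-m_2)\ge 0$. Writing $d_i=n_i-m_i\ge 0$ (the case $d_i\le 0$ is symmetric, or follows by replacing $\varphi$ with $\overline\varphi$), the leading monomials $z^{n_i+k}\overline{z}^{m_i+1}$ have degree-difference $d_i$, so the cross term of these two is nonzero iff $d_1=d_2$; the other pairings involve $z^{n_i+k-1}\overline{z}^{m_i}$ (degree-difference $d_i$) and the analytic terms $z^{n_i+k-m_i-1}$ (degree-difference $d_i$ as well). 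So in fact \emph{every} monomial appearing in $S_{z^{n_1}\overline{z}^{m_1}}f_k$ has degree-difference $d_1$ and every one in $S_{z^{n_2}\overline{z}^{m_2}}f_k$ has degree-difference $d_2$; hence if $d_1\ne d_2$ the entire cross term vanishes, the difference $\|S_\varphi f_k\|^2-\|S_\varphi^* f_k\|^2$ reduces to the sum of the two diagonal differences, and we would need a finer look.

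The key step, therefore, is to assume $d_1\neq d_2$ and derive a contradiction. With $d_1\ne d_2$ the cross terms drop out and
$$0=\|S_\varphi f_k\|^2-\|S_\varphi^* f_k\|^2 = \Big(\|S_{z^{n_1}\overline{z}^{m_1}}f_k\|^2-\|S_{z^{m_1}\overline{z}^{n_1}}f_k\|^2\Big) + |\alpha|^2\Big(\|S_{z^{n_2}\overline{z}^{m_2}}f_k\|^2-\|S_{z^{m_2}\overline{z}^{n_2}}f_k\|^2\Big).$$
From the computation in Proposition \ref{2.c}, the $i$-th bracket equals
$$\frac{m_i^2(m_i+k-n_i)}{(k+1)^2(m_i+k)^2(m_i+k+1)^2}-\frac{n_i^2(n_i+k-m_i)}{(k+1)^2(n_i+k)^2(n_i+k+1)^2}.$$
Clearing denominators across both $i=1,2$ produces a polynomial identity in $k$ valid for all large $k$, hence identically. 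Here I expect the main obstacle: I must show this polynomial is not identically zero unless $n_i=m_i$ for both $i$ (which, since $d_1\ne d_2$ forces $d_1\ne 0$ or $d_2\ne 0$, is the contradiction). The clean way is to examine the asymptotics as $k\to\infty$: each bracket is $O(1/k^5)$, and its leading $1/k^5$-coefficient is $m_i^2-n_i^2$; since the two brackets are weighted by $1$ and $|\alpha|^2>0$ and cannot cancel unless $(m_1^2-n_1^2)+|\alpha|^2(m_2^2-n_2^2)=0$. If that holds, I would peel off the next order term (the $1/k^6$ coefficient), getting a second linear relation between $m_i^2-n_i^2$ and lower data; together with $d_1\neq d_2$ and the sign hypothesis $d_1,d_2\ge0$, these relations force $d_1=d_2=0$, i.e. $n_i=m_i$, contradicting $d_1\ne d_2$. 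Thus $d_1=d_2$, which is exactly $n_1-m_1=n_2-m_2$. (An alternative to the asymptotic bookkeeping: substitute two or three specific large values of $k$ into the cleared-denominator identity and solve the resulting linear system in the quantities $m_i^2-n_i^2$; since the sign hypothesis restricts $d_i$ to one side, only the trivial solution survives.) I would present the asymptotic version, as it is the most transparent and avoids grinding out the full degree-$\sim 10$ polynomial.
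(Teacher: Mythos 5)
Your proposal is correct and takes essentially the same route as the paper: the same test vectors $f_k$, the same observation that the cross terms vanish by \eqref{1.a} when $n_1-m_1\neq n_2-m_2$, and the same clearing-of-denominators/polynomial-identity argument, the only difference being that the paper extracts the contradiction by evaluating the resulting identity at $k=0$ while you read off the leading $1/k^{5}$ coefficient as $k\to\infty$ --- both yield a single linear relation whose terms all have the same sign under the hypothesis $(n_1-m_1)(n_2-m_2)\geqslant 0$. One simplification: your first relation $(m_1^2-n_1^2)+|\alpha|^2(m_2^2-n_2^2)=0$ already forces $n_1=m_1$ and $n_2=m_2$ (taking without loss of generality $n_i\geqslant m_i$, each summand is $\leqslant 0$), so the $1/k^{6}$ coefficient you propose to compute is unnecessary.
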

\begin{proof}
 Suppose, to the contrary, that $n_1-m_1\neq n_2-m_2$. Using the same argument as in the proof of Proposition \ref{2.c}, we first
compute $\|S_{\varphi}f_k\|^2$ and $\|S_{\varphi}^*f_k\|^2$, where $k$ is an integer such that $k>\max\{n_1,n_2,m_1,m_2\}$ and $f_k$ is defined by
$$f_k(z)=z^k\overline{z}-Q(z^k\overline{z})=\Big(z^k\overline{z}-\frac{k}{k+1}z^{k-1}\Big)\in (L_h^2)^\perp.$$
By (\ref{eq3}) and (\ref{eq4}) in the proof of Proposition \ref{2.c}, we have
$$S_{z^{n_1}\overline{z}^{m_1}}(f_k)=z^{n_1+k}\overline{z}^{m_1+1}-\frac{k}{k+1}z^{n_1+k-1}\overline{z}^{m_1}
-\frac{n_1(n_1+k-m_1)}{(k+1)(n_1+k)(n_1+k+1)}z^{n_1+k-m_1-1},$$
$$S_{z^{n_2}\overline{z}^{m_2}}(f_k)=z^{n_2+k}\overline{z}^{m_2+1}-\frac{k}{k+1}z^{n_2+k-1}\overline{z}^{m_2}
-\frac{n_2(n_2+k-m_2)}{(k+1)(n_2+k)(n_2+k+1)}z^{n_2+k-m_2-1},$$
$$S_{z^{n_1}\overline{z}^{m_1}}^{*}(f_k)=z^{m_1+k}\overline{z}^{n_1+1}-\frac{k}{k+1}z^{m_1+k-1}\overline{z}^{n_1}
-\frac{m_1(m_1+k-n_1)}{(k+1)(m_1+k)(m_1+k+1)}z^{m_1+k-n_1-1}$$
and
$$S_{z^{n_2}\overline{z}^{m_2}}^{*}(f_k)=z^{m_2+k}\overline{z}^{n_2+1}-\frac{k}{k+1}z^{m_2+k-1}\overline{z}^{n_2}
-\frac{m_2(m_2+k-n_2)}{(k+1)(m_2+k)(m_2+k+1)}z^{m_2+k-n_2-1}.$$

Since $n_1-m_1\neq n_2-m_2$, we observe by  (\ref{1.a})  that
$$S_{z^{n_1}\overline{z}^{m_1}}(f_k)\perp S_{z^{n_2}\overline{z}^{m_2}}(f_k)\ \ \ \
\mathrm{and}\ \ \ \ S_{z^{n_1}\overline{z}^{m_1}}^{*}(f_k)\perp S_{z^{n_2}\overline{z}^{m_2}}^{*}(f_k).$$
This leads to
\begin{align*}
\|S_{\varphi}f_k\|^2&=\|S_{z^{n_1}\overline{z}^{m_1}}(f_k)\|^2+|\alpha|^2\|S_{z^{n_2}\overline{z}^{m_2}}(f_k)\|^2\\
&=\frac{1}{n_1+k+m_1+2}-\frac{2k}{(k+1)(n_1+k+m_1+1)}\\
&\quad-\frac{2n_1(n_1+k-m_1)}{(k+1)(n_1+k)(n_1+k+1)^2}+\frac{k^2}{(k+1)^2(n_1+k+m_1)}\\
&\quad+\frac{2kn_1(n_1+k-m_1)}{(k+1)^2(n_1+k)^2(n_1+k+1)}+\frac{n_{1}^2(n_1+k-m_1)}{(k+1)^2(n_1+k)^2(n_1+k+1)^2}\\
&\quad+\frac{|\alpha|^2}{n_2+k+m_2+2}-\frac{2|\alpha|^2k}{(k+1)(n_2+k+m_2+1)}\\
&\quad-\frac{2|\alpha|^2n_2(n_2+k-m_2)}{(k+1)(n_2+k)(n_2+k+1)^2}+\frac{|\alpha|^2k^2}{(k+1)^2(n_2+k+m_2)}\\
&\quad+\frac{2|\alpha|^2kn_2(n_2+k-m_2)}{(k+1)^2(n_2+k)^2(n_2+k+1)}+\frac{|\alpha|^2n_{2}^2(n_2+k-m_2)}{(k+1)^2(n_2+k)^2(n_2+k+1)^2}.
\end{align*}
Similarly, we also have
\begin{align*}
\|S_{\varphi}^*f_k\|^2&=\|S_{z^{n_1}\overline{z}^{m_1}}^{*}(f_k)\|^2+|\alpha|^2\|S_{z^{n_2}\overline{z}^{m_2}}^{*}(f_k)\|^2\\
&=\frac{1}{m_1+k+n_1+2}-\frac{2k}{(k+1)(m_1+k+n_1+1)}\\
&\quad-\frac{2m_1(m_1+k-n_1)}{(k+1)(m_1+k)(m_1+k+1)^2}+\frac{k^2}{(k+1)^2(m_1+k+n_1)}\\
&\quad+\frac{2km_1(m_1+k-n_1)}{(k+1)^2(m_1+k)^2(m_1+k+1)}+\frac{m_{1}^2(m_1+k-n_1)}{(k+1)^2(m_1+k)^2(m_1+k+1)^2}\\
&\quad+\frac{|\alpha|^2}{m_2+k+n_2+2}-\frac{2|\alpha|^2k}{(k+1)(m_2+k+n_2+1)}\\
&\quad-\frac{2|\alpha|^2m_2(m_2+k-n_2)}{(k+1)(m_2+k)(m_2+k+1)^2}+\frac{|\alpha|^2k^2}{(k+1)^2(m_2+k+n_2)}\\
&\quad+\frac{2|\alpha|^2km_2(m_2+k-n_2)}{(k+1)^2(m_2+k)^2(m_2+k+1)}+\frac{|\alpha|^2m_{2}^2(m_2+k-n_2)}{(k+1)^2(m_2+k)^2(m_2+k+1)^2}.
\end{align*}
Combining the above two equalities gives
\begin{equation}\label{eq8}
\begin{aligned}
0&=\big\langle (S_{\varphi}^*S_{\varphi}-S_{\varphi}S_{\varphi}^*)(f_k),f_k\big\rangle=\|S_{\varphi}f_k\|^2-\|S_{\varphi}^*f_k\|^2\\
&=\|S_{z^{n_1}\overline{z}^{m_1}}(f_k)\|^2+|\alpha|^2\|S_{z^{n_2}\overline{z}^{m_2}}(f_k)\|^2
-\|S_{z^{n_1}\overline{z}^{m_1}}^{*}(f_k)\|^2-|\alpha|^2\|S_{z^{n_2}\overline{z}^{m_2}}^{*}(f_k)\|^2\\
&=\frac{-n_{1}^2(n_1+k-m_1)}{(k+1)^2(n_1+k)^2(n_1+k+1)^2}+\frac{m_{1}^2(m_1+k-n_1)}{(k+1)^2(m_1+k)^2(m_1+k+1)^2}\\
&\quad+\frac{-|\alpha|^2n_{2}^2(n_2+k-m_2)}{(k+1)^2(n_2+k)^2(n_2+k+1)^2}+\frac{|\alpha|^2m_{2}^2(m_2+k-n_2)}{(k+1)^2(m_2+k)^2(m_2+k+1)^2}.
\end{aligned}
\end{equation}
Therefore, we conclude  that
\begin{equation}\label{eq5}
\begin{aligned}
&\quad\frac{n_{1}^2(n_1+k-m_1)}{(n_1+k)^2(n_1+k+1)^2}+\frac{|\alpha|^2n_{2}^2(n_2+k-m_2)}{(n_2+k)^2(n_2+k+1)^2}\\
&=\frac{m_{1}^2(m_1+k-n_1)}{(m_1+k)^2(m_1+k+1)^2}+\frac{|\alpha|^2m_{2}^2(m_2+k-n_2)}{(m_2+k)^2(m_2+k+1)^2}
\end{aligned}
\end{equation}
for any integer $k>\max\{n_1,n_2,m_1,m_2\}$.

 In order to deduce the desired conclusion, we will use the same method as the one in the proof of Proposition \ref{2.c}. Multiplying both sides of (\ref{eq5}) by
$$\left[(n_1+k)(n_1+k+1)(n_2+k)(n_2+k+1)(m_1+k)(m_1+k+1)(m_2+k)(m_2+k+1)\right]^2,$$
now we obtain a polynomial $p(z)$ with infinitely many zeros. Thus $p$ must be a zero polynomial  and (\ref{eq5}) holds for
all $k\in \N$. By substituting $k=0$ into (\ref{eq5}), we get
\begin{align}\label{mn}
\frac{n_1-m_1}{(n_1+1)^2}+\frac{|\alpha|^2(n_2-m_2)}{(n_2+1)^2}
=-\frac{n_1-m_1}{(m_1+1)^2}-\frac{|\alpha|^2(n_2-m_2)}{(m_2+1)^2}.
\end{align}

If $n_1=m_1$ or $n_2=m_2$, then we have
$$n_1-m_1=n_2-m_2=0,$$
since $\alpha\neq 0$. This  is a contradiction. Otherwise, we have by our assumption that
$$(n_1-m_1)(n_2-m_2)>0.$$
In this case, the left-hand side and the right-hand side of (\ref{mn}) have  opposite signs. This is also impossible. Thus we finish the proof of this lemma.
\end{proof}

Under the assumption in Lemma \ref{2.d}, we can further show that $n_1=m_1$ and $n_2=m_2$.

\begin{lem}\label{2.g}
Let $n_1,n_2,m_1,m_2$ be positive integers such that
$$(n_1-m_1)(n_2-m_2)\geqslant 0.$$
Suppose that $\varphi(z)=z^{n_1}\overline{z}^{m_1}+\alpha z^{n_2}\overline{z}^{m_2}$ is a nonzero function, where $\alpha$ is a nonzero  constant. Then  $n_1=m_1$ and $n_2=m_2$ if $S_{\varphi}$ is normal on $(L_h^2)^\perp$.
\end{lem}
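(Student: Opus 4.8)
The plan is to bootstrap from Lemma \ref{2.d}. By that lemma, normality of $S_\varphi$ together with the hypothesis $(n_1-m_1)(n_2-m_2)\geqslant 0$ already forces $n_1-m_1=n_2-m_2=:d$. Since $S_\varphi$ is normal if and only if $S_\varphi^*=S_{\overline\varphi}$ is, and $\overline\varphi=z^{m_1}\overline{z}^{n_1}+\overline\alpha\,z^{m_2}\overline{z}^{n_2}$ is again a nonzero symbol of the same type whose exponent gaps $m_i-n_i=-d$ still satisfy the sign hypothesis, we may assume $d\geqslant 0$ after possibly replacing $\varphi$ by $\overline\varphi$ (the conclusion $n_1=m_1,\ n_2=m_2$ is symmetric in this replacement). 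If $d=0$ we are done, so I will argue by contradiction and suppose $d\geqslant 1$.

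Next I would re-run the computations from the proofs of Proposition \ref{2.c} and Lemma \ref{2.d} with the same test vectors $f_k(z)=z^k\overline{z}-\frac{k}{k+1}z^{k-1}\in(L_h^2)^\perp$ for $k>\max\{n_1,n_2,m_1,m_2\}$, writing $S_\varphi=A_1+\alpha A_2$ with $A_i:=S_{z^{n_i}\overline{z}^{m_i}}$. The crucial difference from Lemma \ref{2.d} is that now, since $n_1-m_1=n_2-m_2$, the vectors $A_1 f_k$ and $A_2 f_k$ both lie on the diagonal $\{z^a\overline{z}^b:a-b=d+k-1\}$ (and likewise $A_1^* f_k,A_2^* f_k$ both lie on $\{a-b=k-d-1\}$), so the cross terms no longer vanish. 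Using that these cross pairings are real numbers, one gets
\begin{align*}
\|S_\varphi f_k\|^2-\|S_\varphi^* f_k\|^2
&=\big(\|A_1 f_k\|^2-\|A_1^* f_k\|^2\big)+|\alpha|^2\big(\|A_2 f_k\|^2-\|A_2^* f_k\|^2\big)\\
&\quad+2\,\mathrm{Re}(\alpha)\big(\langle A_1 f_k,A_2 f_k\rangle-\langle A_1^* f_k,A_2^* f_k\rangle\big).
\end{align*}
The first two differences are the quantities $D_i(k):=\frac{-n_i^2(k+d)}{(k+1)^2(n_i+k)^2(n_i+k+1)^2}+\frac{m_i^2(k-d)}{(k+1)^2(m_i+k)^2(m_i+k+1)^2}$ already computed in Proposition \ref{2.c}. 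For the cross term $X(k):=\langle A_1 f_k,A_2 f_k\rangle-\langle A_1^* f_k,A_2^* f_k\rangle$, I would substitute the three-monomial expansions (\ref{eq3})--(\ref{eq4}) of $A_i f_k$ and $A_i^* f_k$ and evaluate the nine resulting pairings via (\ref{1.a}). The key simplification is that the four pairings built only from monomials of the form $z^{n_i+k}\overline{z}^{m_i+1}$ and $z^{n_i+k-1}\overline{z}^{m_i}$ have exactly the same value (with the same coefficients) in $\langle A_1 f_k,A_2 f_k\rangle$ as in $\langle A_1^* f_k,A_2^* f_k\rangle$ — because the relevant Bergman pairings from (\ref{1.a}) depend only on the symmetric quantities $n_i+m_i$ — so they cancel in $X(k)$, leaving only the contributions of the analytic monomials $z^{d+k-1}$ and $z^{k-d-1}$.

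Normality then gives $D_1(k)+|\alpha|^2 D_2(k)+2\,\mathrm{Re}(\alpha)X(k)=0$ for all $k>\max\{n_1,n_2,m_1,m_2\}$; clearing denominators turns this into a polynomial identity, so it holds for every $k$, and in particular at $k=0$, where no denominator vanishes because $n_1,n_2,m_1,m_2\geqslant 1$ and $d\geqslant 1$. At $k=0$ all terms carrying the factor $\frac{k}{k+1}$ disappear, and a short computation (using $n_i=m_i+d$) yields $D_i(0)=-d\big(\frac{1}{(n_i+1)^2}+\frac{1}{(m_i+1)^2}\big)$ and $X(0)=-d\big(\frac{1}{(n_1+1)(n_2+1)}+\frac{1}{(m_1+1)(m_2+1)}\big)$. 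Substituting these, dividing by $-d$, and completing the square, the identity becomes
$$\Big|\frac{1}{n_1+1}+\frac{\alpha}{n_2+1}\Big|^2+\Big|\frac{1}{m_1+1}+\frac{\alpha}{m_2+1}\Big|^2=0,$$
so $\alpha=-\frac{n_2+1}{n_1+1}=-\frac{m_2+1}{m_1+1}$. Cross-multiplying and using $n_i=m_i+d$ gives $m_1 d=m_2 d$, whence $m_1=m_2$ (as $d\geqslant 1$), so $n_1=n_2$, so $\alpha=-1$, and therefore $\varphi=z^{n_1}\overline{z}^{m_1}+\alpha z^{n_2}\overline{z}^{m_2}=0$, contradicting that $\varphi$ is nonzero. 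Hence $d=0$, i.e. $n_1=m_1$ and $n_2=m_2$. The main obstacle is the bookkeeping in $X(k)$: correctly listing the nine Bergman pairings and recognizing the cancellation of the ``non-analytic'' ones; once that is done, the evaluation at $k=0$ together with the completion of squares closes the argument.
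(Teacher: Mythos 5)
Your proposal is correct and follows essentially the same route as the paper: invoke Lemma \ref{2.d} to get $n_1-m_1=n_2-m_2$, add the (now nonvanishing, real) cross terms $\langle S_{z^{n_1}\overline{z}^{m_1}}f_k,S_{z^{n_2}\overline{z}^{m_2}}f_k\rangle-\langle S_{z^{n_1}\overline{z}^{m_1}}^*f_k,S_{z^{n_2}\overline{z}^{m_2}}^*f_k\rangle$ to the identity from Proposition \ref{2.c}, extend the resulting polynomial identity to $k=0$, and complete the square to force $\alpha=-\frac{n_2+1}{n_1+1}=-\frac{m_2+1}{m_1+1}$, which leads to $n_1=n_2$, $m_1=m_2$, $\alpha=-1$ and hence $\varphi=0$, a contradiction. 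The only cosmetic deviations are your harmless WLOG normalization $d\geqslant 0$ and a slightly more direct final algebra step; both your computed values $D_i(0)$ and $X(0)$ agree with the paper's equation (\ref{eq9}) evaluated at $k=0$.
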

\begin{proof}
Recall that we have shown in the above lemma that
$$n_1-m_1=n_2-m_2.$$
Using the same notation as the one in Lemma \ref{2.d}, we will  calculate the norms $\|S_\varphi f_k\|$ and $\|S_\varphi^* f_k\|$. To do this, the following equalities are needed (which can be calculated  by using (\ref{1.a})):
\begin{align*}
\langle z^{n_1+k}\overline{z}^{m_1+1},z^{n_2+k}\overline{z}^{m_2+1}\rangle&=\langle z^{m_1+k}\overline{z}^{n_1+1},z^{m_2+k}\overline{z}^{n_2+1}\rangle\\
&=\frac{2}{n_1+m_1+n_2+m_2+2k+4},
\end{align*}
\begin{align*}
\langle z^{n_1+k}\overline{z}^{m_1+1},z^{n_2+k-1}\overline{z}^{m_2}\rangle&=\langle z^{m_1+k}\overline{z}^{n_1+1},z^{m_2+k-1}\overline{z}^{n_2}\rangle\\
&=\langle z^{n_1+k-1}\overline{z}^{m_1},z^{n_2+k}\overline{z}^{m_2+1}\rangle\\
&=\langle z^{m_1+k-1}\overline{z}^{n_1},z^{m_2+k}\overline{z}^{n_2+1}\rangle\\
&=\frac{2}{n_1+m_1+n_2+m_2+2k+2},
\end{align*}
\begin{align*}
\langle z^{n_1+k-1}\overline{z}^{m_1},z^{n_2+k-1}\overline{z}^{m_2}\rangle&=\langle z^{m_1+k-1}\overline{z}^{n_1},z^{m_2+k-1}\overline{z}^{n_2}\rangle\\
&=\frac{2}{n_1+m_1+n_2+m_2+2k},
\end{align*}

$$\langle z^{n_1+k}\overline{z}^{m_1+1},z^{n_2+k-m_2-1}\rangle=\frac{1}{n_1+k+1}, \ \ \langle z^{n_1+k-1}\overline{z}^{m_1},z^{n_2+k-m_2-1}\rangle=\frac{1}{n_1+k},$$
$$\langle z^{n_1+k-m_1-1},z^{n_2+k}\overline{z}^{m_2+1}\rangle=\frac{1}{n_2+k+1},\ \ \langle z^{n_1+k-m_1-1},z^{n_2+k-1}\overline{z}^{m_2}\rangle=\frac{1}{n_2+k},$$
$$\langle z^{m_1+k}\overline{z}^{n_1+1},z^{m_2+k-n_2-1}\rangle=\frac{1}{m_1+k+1},  \ \ \langle z^{m_1+k-1}\overline{z}^{n_1},z^{m_2+k-n_2-1}\rangle=\frac{1}{m_1+k}, $$
$$\langle z^{m_1+k-n_1-1},z^{m_2+k}\overline{z}^{n_2+1}\rangle=\frac{1}{m_2+k+1},\ \ \langle z^{m_1+k-n_1-1},z^{m_2+k-1}\overline{z}^{n_2}\rangle=\frac{1}{m_2+k}. $$
Since $n_1-m_1=n_2-m_2$, we also have
$$\langle z^{n_1+k-m_1-1},z^{n_2+k-m_2-1}\rangle=\frac{1}{n_1-m_1+k}=\frac{1}{n_2-m_2+k}$$
and
$$\langle z^{m_1+k-n_1-1},z^{m_2+k-n_2-1}\rangle=\frac{1}{m_1-n_1+k}=\frac{1}{m_2-n_2+k}.$$

From the  expressions of $S_{z^{n_1}\overline{z}^{m_1}}(f_k)$, $S_{z^{n_2}\overline{z}^{m_2}}(f_k)$, $S_{z^{n_1}\overline{z}^{m_1}}^*(f_k)$ and $S_{z^{n_2}\overline{z}^{m_2}}^*(f_k)$ obtained in  Lemma \ref{2.d}, we observe that $\langle S_{z^{n_1}\overline{z}^{m_1}}(f_k),S_{z^{n_2}\overline{z}^{m_2}}(f_k)\rangle$ and $\langle S_{z^{n_1}\overline{z}^{m_1}}^*(f_k),S_{z^{n_2}\overline{z}^{m_2}}^*(f_k)\rangle$ are both real.
Thus we have
\begin{align*}
\|S_{\varphi}f_k\|^2&=
\big\langle S_{z^{n_1}\overline{z}^{m_1}}(f_k)+\alpha S_{z^{n_2}\overline{z}^{m_2}}(f_k),S_{z^{n_1}\overline{z}^{m_1}}(f_k)+\alpha S_{z^{n_2}\overline{z}^{m_2}}(f_k)\big\rangle\\
&=\|S_{z^{n_1}\overline{z}^{m_1}}(f_k)\|^2+(\overline{\alpha}+\alpha)\langle S_{z^{n_1}\overline{z}^{m_1}}(f_k),S_{z^{n_2}\overline{z}^{m_2}}(f_k)\rangle+|\alpha|^2\|S_{z^{n_2}\overline{z}^{m_2}}(f_k)\|^2
\end{align*}
and
\begin{align*}
\|S_{\varphi}^*f_k\|^2&=
\big\langle S_{z^{n_1}\overline{z}^{m_1}}^*(f_k)+\overline{\alpha}S_{z^{n_2}\overline{z}^{m_2}}^*(f_k),S_{z^{n_1}\overline{z}^{m_1}}^*(f_k)+\overline{\alpha}
S_{z^{n_2}\overline{z}^{m_2}}^*(f_k)\big\rangle\\
&=\|S_{z^{n_1}\overline{z}^{m_1}}^*(f_k)\|^2+(\alpha+\overline{\alpha})\langle S_{z^{n_1}\overline{z}^{m_1}}^*(f_k),S_{z^{n_2}\overline{z}^{m_2}}^*(f_k)\rangle+|\alpha|^2\|S_{z^{n_2}\overline{z}^{m_2}}^*(f_k)\|^2.
\end{align*}
It follows that
\begin{equation}\label{eq9}
\begin{aligned}
0&=\|S_{\varphi}f_k\|^2-\|S_{\varphi}^*f_k\|^2\\
&=\|S_{z^{n_1}\overline{z}^{m_1}}(f_k)\|^2+|\alpha|^2\|S_{z^{n_2}\overline{z}^{m_2}}(f_k)\|^2-\|S_{z^{n_1}\overline{z}^{m_1}}^*(f_k)\|^2
-|\alpha|^2\|S_{z^{n_2}\overline{z}^{m_2}}^*(f_k)\|^2\\
&\quad+(\alpha+\overline{\alpha})\big[\langle S_{z^{n_1}\overline{z}^{m_1}}(f_k),S_{z^{n_2}\overline{z}^{m_2}}(f_k)\rangle-\langle S_{z^{n_1}\overline{z}^{m_1}}^*(f_k),S_{z^{n_2}\overline{z}^{m_2}}^*(f_k)\rangle\big]\\
&=\frac{-n_{1}^2(n_1+k-m_1)}{(k+1)^2(n_1+k)^2(n_1+k+1)^2}+\frac{m_{1}^2(m_1+k-n_1)}{(k+1)^2(m_1+k)^2(m_1+k+1)^2}\\
&\quad+\frac{-|\alpha|^2n_{2}^2(n_2+k-m_2)}{(k+1)^2(n_2+k)^2(n_2+k+1)^2}+\frac{|\alpha|^2m_{2}^2(m_2+k-n_2)}{(k+1)^2(m_2+k)^2(m_2+k+1)^2}\\
&\quad+\frac{-(\alpha+\overline{\alpha})n_1n_2(n_2+k-m_2)}{(k+1)^2(n_1+k)(n_1+k+1)(n_2+k)(n_2+k+1)}\\
&\quad+\frac{(\alpha+\overline{\alpha})m_1m_2(m_2+k-n_2)}{(k+1)^2(m_1+k)(m_1+k+1)(m_2+k)(m_2+k+1)},
\end{aligned}
\end{equation}
where the third equality comes from (\ref{eq8}). Note that the above equality holds for all $k>\max\{n_1,n_2,m_1,m_2\}$. Using the same techniques as in the proof of Lemma \ref{2.d}, we see that (\ref{eq9}) is, in fact, an identity. Thus (\ref{eq9}) can be  reduced to
$$(m_1-n_1)\left[\left|\frac{1}{n_1+1}+\frac{\alpha}{n_2+1}\right|^2+\left|\frac{1}{m_1+1}+\frac{\alpha}{m_2+1}\right|^2\right]=0$$
if we take $k=0$.   This implies  that $n_1=m_1$ or
$$\left|\frac{1}{n_1+1}+\frac{\alpha}{n_2+1}\right|^2+\left|\frac{1}{m_1+1}+\frac{\alpha}{m_2+1}\right|^2=0.$$

Suppose that $n_1\neq m_1$, then we would have
$$-\alpha=\frac{n_2+1}{n_1+1}=\frac{m_2+1}{m_1+1},$$
to obtain $$n_2+m_1+n_2m_1=n_1+m_2+n_1m_2.$$ Since $n_1-m_1=n_2-m_2$, we have $n_2m_1=n_1m_2$. Thus
$$(n_1+m_2)^2=n_1^2+m_2^2+2n_1m_2=(n_2+m_1)^2=n_2^2+m_1^2+2n_2m_1,$$
which gives that $n_1^2-m_1^2=n_2^2-m_2^2$. As $n_1-m_1=n_2-m_2\neq 0$, we have  $n_1+m_1=n_2+m_2$. But this implies that
$n_1=n_2$ and $m_1=m_2$. In this case, $\varphi$  can be rewritten as $$\varphi(z)=(1+\alpha)z^{n_1}\overline{z}^{m_1}.$$  By our assumption that $\varphi$ is a nonzero function,  we now  conclude by Proposition \ref{2.c} that $n_1=m_1$.  This is a contradiction. Thus we obtain that $n_1=m_1$ and $n_2=m_2$, to complete the proof of Lemma \ref{2.g}.
\end{proof}

For functions of the form $\varphi(z)=z^{n_1}\overline{z}^{m_1}+\alpha z^{n_2}\overline{z}^{m_2}$, we will discuss in the next lemma for the case that
$(n_1-m_1)(n_2-m_2)<0$.

\begin{lem}\label{2.h}
Let $\varphi(z)=z^{n_1}\overline{z}^{m_1}+\alpha z^{n_2}\overline{z}^{m_2}$ be a nonzero function, where $\alpha$ is a nonzero complex constant and  $n_1,n_2,m_1,m_2$ are positive integers. Suppose that
$$(n_1-m_1)(n_2-m_2)<0.$$
Then we have  $|\alpha|=1$, $n_1=m_2$ and $n_2=m_1$ if  $S_{\varphi}$ is normal on $(L_h^2)^{\perp}$.
\end{lem}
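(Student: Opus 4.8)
The plan is to mimic the structure of Lemmas \ref{2.d} and \ref{2.g}, using the same test vectors $f_k(z)=z^k\overline{z}-\frac{k}{k+1}z^{k-1}\in(L_h^2)^\perp$ for integers $k>\max\{n_1,n_2,m_1,m_2\}$, and exploiting normality of $S_\varphi$ (which by Theorem \ref{2.b} is equivalent to hyponormality). Write $d_1=n_1-m_1$ and $d_2=n_2-m_2$; the hypothesis is $d_1d_2<0$, so in particular $n_1\neq m_1$, $n_2\neq m_2$, and $d_1\neq d_2$. Because $d_1\neq d_2$, the ``cross'' terms $S_{z^{n_1}\overline{z}^{m_1}}(f_k)$ and $S_{z^{n_2}\overline{z}^{m_2}}(f_k)$ are orthogonal by (\ref{1.a}) (their monomials lie in different ``diagonals''), and likewise for the adjoints; hence $\|S_\varphi f_k\|^2$ and $\|S_\varphi^* f_k\|^2$ decompose as in the displayed formulas at the start of the proof of Lemma \ref{2.d}, with no interference term. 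So the condition $0=\|S_\varphi f_k\|^2-\|S_\varphi^* f_k\|^2$ reduces to exactly equation (\ref{eq5}):
\begin{equation*}
\frac{n_{1}^2(n_1+k-m_1)}{(n_1+k)^2(n_1+k+1)^2}+\frac{|\alpha|^2n_{2}^2(n_2+k-m_2)}{(n_2+k)^2(n_2+k+1)^2}
=\frac{m_{1}^2(m_1+k-n_1)}{(m_1+k)^2(m_1+k+1)^2}+\frac{|\alpha|^2m_{2}^2(m_2+k-n_2)}{(m_2+k)^2(m_2+k+1)^2},
\end{equation*}
valid for all large $k$. Clearing denominators gives a polynomial identity in $k$ that must hold identically (finitely many potential zeros would otherwise be exceeded), so (\ref{eq5}) in fact holds for all real values of the parameter, in particular for $k=0$, which yields
\begin{equation*}
\frac{n_1-m_1}{(n_1+1)^2}+\frac{|\alpha|^2(n_2-m_2)}{(n_2+1)^2}=-\frac{n_1-m_1}{(m_1+1)^2}-\frac{|\alpha|^2(n_2-m_2)}{(m_2+1)^2}.
\end{equation*}
Since now $d_1$ and $d_2$ have opposite signs, the two summands on each side partly cancel rather than reinforce, so this single equation is no longer contradictory; I will need to extract more information.

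The next step is to combine the $k=0$ identity above with further values of the identity (\ref{eq5}) — e.g. comparing leading/subleading coefficients of the cleared polynomial, or simply evaluating at one more convenient point — to pin down the relationship between $(n_1,m_1)$ and $(n_2,m_2)$. Concretely, I expect the cleared polynomial identity to force, after matching the top-degree coefficients (as in the degree-$5$ computation in Proposition \ref{2.c} and Lemma \ref{2.d}), a relation of the form $n_1^2+|\alpha|^2 n_2^2 = m_1^2+|\alpha|^2 m_2^2$ (or the analogous symmetric identity), and matching a lower-order coefficient to force $n_1+|\alpha|^2 n_2 = m_1+|\alpha|^2 m_2$ up to the appropriate symmetrization. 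Together with $d_1 d_2<0$ these should leave only the possibility $\{n_1,m_1\}=\{m_2,n_2\}$ with the sign of $d_1$ opposite to that of $d_2$, i.e. $n_1=m_2$ and $m_1=n_2$; substituting back into the $k=0$ equation (where the terms now pair up as $\frac{d_1}{(n_1+1)^2}$ against $\frac{|\alpha|^2 d_1}{(m_1+1)^2}$ on matched denominators, using $n_2+1=m_1+1$ and $m_2+1=n_1+1$) collapses it to $(n_1-m_1)\bigl(1-|\alpha|^2\bigr)\bigl[\tfrac1{(n_1+1)^2}+\tfrac1{(m_1+1)^2}\bigr]=0$, whence $|\alpha|=1$ since $n_1\neq m_1$. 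That gives all three asserted conclusions $|\alpha|=1$, $n_1=m_2$, $n_2=m_1$.

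The main obstacle I anticipate is the middle step: ruling out ``accidental'' solutions of the polynomial identity (\ref{eq5}) in which $\{n_1,m_1\}$ and $\{n_2,m_2\}$ are genuinely different pairs but the weighted rational functions happen to match. One should not expect to get away with evaluating at a single extra point; the clean way is to treat (\ref{eq5}) as an equality of rational functions of $k$ and compare partial-fraction data, or equivalently to compare several coefficients of the cleared polynomial $p(k)$, which has degree roughly $5$ in $k$ (exactly as in the proofs above). Since $d_1d_2<0$ we may assume without loss of generality $n_1>m_1$ and $n_2<m_2$; then on the left side of (\ref{eq5}) the first term has a genuine factor $(n_1+k-m_1)>0$ of numerator degree one while the second has $(n_2+k-m_2)$ which can be made negative for small $k$, and similarly on the right — tracking these signs together with the coefficient comparison is exactly what forces the pairing. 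I would organise this as: (i) establish (\ref{eq5}) as a polynomial identity; (ii) read off $n_1^2+|\alpha|^2n_2^2=m_1^2+|\alpha|^2m_2^2$ from the top coefficient; (iii) read off the next coefficient to get a second symmetric relation; (iv) deduce $\{n_1,m_1\}=\{n_2,m_2\}$ as multisets, hence (using $d_1d_2<0$) $n_1=m_2,\ n_2=m_1$; (v) feed this back to get $|\alpha|=1$. If steps (ii)–(iii) do not cleanly separate the pairs, the fallback is to regard both sides of (\ref{eq5}) as functions of $k$ and match the (simple) poles at $k=-n_1,-m_1,-n_2,-m_2$ together with the residues, which determines the pairs and the value $|\alpha|^2$ directly.
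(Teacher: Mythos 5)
Your setup (the test vectors $f_k$, the orthogonality of the two ``diagonals'' since $n_1-m_1\neq n_2-m_2$, the reduction to the identity (\ref{eq5}), and the final feedback step giving $|\alpha|=1$ once $n_1=m_2$, $n_2=m_1$ are known) is exactly what the paper does, and your closing computation at $k=0$ is a correct alternative to the paper's route, which instead reads $|\alpha|=1$ off the top-degree ($x^{13}$) coefficient identity $n_1^2+|\alpha|^2n_2^2=m_1^2+|\alpha|^2m_2^2$. The weak link is your primary plan for the middle step: matching the top two coefficients of the cleared polynomial gives only two symmetric relations in the five unknowns $n_1,m_1,n_2,m_2,|\alpha|^2$, and there is no reason these alone should force $\{n_1,m_1+1,\dots\}$ to pair up; I would not expect step (iii) to close the argument. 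Your fallback, however, is precisely the paper's proof: writing $H(x)=F(x)G(x)$ for the cleared polynomial, one evaluates $H$ at the extremal pole locations. If $n_1>m_2$ (so $n_1>\max\{n_2,m_1,m_2\}$), then at $x=-n_1-1$ the factors of $G$ kill the $b$, $c$, $d$ contributions but not the $a$ contribution, whose double pole at $-n_1-1$ exactly cancels the double zero of $G$ there, leaving $H(-n_1-1)\neq 0$ --- a contradiction; the symmetric evaluation gives $n_1\geqslant m_2$, and the analogous argument at $x=-n_2$ gives $n_2=m_1$. Two small corrections to how you phrased this: the poles of (\ref{eq5}) are \emph{double} poles at the eight points $-n_1,-n_1-1,-n_2,-n_2-1,-m_1,-m_1-1,-m_2,-m_2-1$ (not simple poles at $-n_1,-m_1,-n_2,-m_2$), and the cleared polynomial here has degree $13$, not ``roughly $5$'' as in Proposition \ref{2.c}. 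With the fallback promoted to the main argument, your outline is essentially the paper's proof.
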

\begin{proof}
Without loss of generality, we may assume that
$$n_1>m_1\ \ \ \ \text{and}\ \ \ \  n_2<m_2.$$
Since $n_1-m_1\neq n_2-m_2$, we observe that (\ref{eq5}) in the proof of Lemma \ref{2.d} is still  valid in this case, i.e.,
\begin{align*}\label{eq6}
&\quad\frac{n_{1}^2(n_1+k-m_1)}{(n_1+k)^2(n_1+k+1)^2}+\frac{|\alpha|^2n_{2}^2(n_2+k-m_2)}{(n_2+k)^2(n_2+k+1)^2}\\
&=\frac{m_{1}^2(m_1+k-n_1)}{(m_1+k)^2(m_1+k+1)^2}+\frac{|\alpha|^2m_{2}^2(m_2+k-n_2)}{(m_2+k)^2(m_2+k+1)^2}
\end{align*}
for any integer $k>\max\{n_1,n_2,m_1,m_2\}$.
For convenience, we write
$$a(k)=\frac{n_{1}^2(n_1+k-m_1)}{(n_1+k)^2(n_1+k+1)^2},\ \ b(k)=\frac{n_{2}^2(n_2+k-m_2)}{(n_2+k)^2(n_2+k+1)^2},$$
$$c(k)=\frac{m_{1}^2(m_1+k-n_1)}{(m_1+k)^2(m_1+k+1)^2},\ \ d(k)=\frac{m_{2}^2(m_2+k-n_2)}{(m_2+k)^2(m_2+k+1)^2}.$$

Let
$$F(k)=a(k)+|\alpha|^2b(k)-c(k)-|\alpha|^2d(k)$$
and
$$G(k)=\big[(n_1+k)(n_1+k+1)(n_2+k)(n_2+k+1)(m_1+k)(m_1+k+1)(m_2+k)(m_2+k+1)\big]^2.$$
Then $F(k)=0$ for all  $k>\max\{n_1, n_2, m_1, m_2\}$.
Let
$$H(x)=F(x)G(x).$$  Note that $H(x)$ is a polynomial  of $x$ and $H(x)$ has infinitely many zeros.  This yields  that $H(x)$ is a zero polynomial.
Considering the coefficient of $x^{13}$ in $H(x)$, we have that
\begin{equation}\label{eq7}
n_{1}^2+|\alpha|^2n_{2}^2=m_{1}^2+|\alpha|^2m_{2}^2.
\end{equation}

Now we are going to show that $n_1=m_2$. If not, we first assume that $n_1>m_2$. Recalling that $n_1>m_1$ and $n_2<m_2$, we obtian
$$n_1>\max\{n_2,m_1,m_2\}.$$
This gives that
\begin{align*}
b(-n_{1}-1)G(-n_{1}-1)&=c(-n_{1}-1)G(-n_{1}-1)\\
&=d(-n_{1}-1)G(-n_{1}-1)\\
&=0.
\end{align*}
It follows that
$$F(-n_{1}-1)G(-n_{1}-1)=a(-n_{1}-1)G(-n_{1}-1)=0.$$
On the other hand,
\begin{align*}
a(-n_{1}-1)G(-n_{1}-1)&=-n_{1}^2(1+m_1)\bigg(\frac{g(x)}{(n_{1}+x)^2(n_1+1+x)^2}\bigg)\bigg|_{x=-n_{1}-1}\\
&\neq 0.
\end{align*}
The contradiction gives  $n_1\leqslant m_2$. Similarly, we can show that $n_1\geqslant m_2$. So we have  $n_1=m_2$.

To finish the proof, it remains to show that $m_1=n_2$.  If $m_1>n_2$, then
$$n_2<\min\{n_1,m_1,m_2\}.$$
This  implies that
$$a(-n_{2})G(-n_{2})=c(-n_{2})G(-n_{2})=d(-n_{2})G(-n_{2})=0,$$
to obtain
$$F(-n_{2})G(-n_{2})=|\alpha|^2b(-n_{2})G(-n_{2})=0.$$
However,
$$b(-n_{2})G(-n_{2})=-m_2n_{2}^2\bigg(\frac{G(x)}{(n_{2}+x)^2(n_2+1+x)^2}\bigg)\bigg|_{x=-n_{2}}\neq 0,$$
which is a contradiction, so $m_1\leqslant  n_2$. Using the same argument as above, we can also show that $m_1\geqslant n_2$. Thus we get $n_2=m_1$. From (\ref{eq7}) and $(n_1-m_1)(n_2-m_2)<0$, we conclude that
$|\alpha|=1$. This completes the proof of Lemma \ref{2.h}.
\end{proof}

Before presenting the proof of Theorem \ref{M2}, one more lemma is needed.
\begin{lem}\label{2.i}
Suppose that $\varphi=|z|^{2n}+\alpha|z|^{2m}$, where $\alpha$ is a constant, $m$ and $n$ are distinct integers. Then
$S_{\varphi}$ is normal if and only if $\alpha$ is real.
\end{lem}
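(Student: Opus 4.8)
The plan is to exploit the additive form of the symbol together with the self-adjointness of the ``diagonal'' building blocks $S_{|z|^{2j}}$, and to reduce everything to the question of whether those blocks commute.

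First I would write $\varphi=|z|^{2n}+\alpha|z|^{2m}=z^{n}\overline{z}^{n}+\alpha z^{m}\overline{z}^{m}$, so that by the linearity of $\psi\mapsto S_\psi$ we have $S_\varphi=A+\alpha B$, where $A:=S_{z^{n}\overline{z}^{n}}$ and $B:=S_{z^{m}\overline{z}^{m}}$. Since $\overline{z^{j}\overline{z}^{j}}=z^{j}\overline{z}^{j}$, the identity $S_\psi^*=S_{\overline\psi}$ shows that $A$ and $B$ are self-adjoint (exactly as in the first line of the proof of Proposition \ref{2.c}). Consequently $S_\varphi^*=A+\overline{\alpha}B$, and a direct expansion gives
\begin{align*}
S_\varphi^*S_\varphi-S_\varphi S_\varphi^*&=(A+\overline{\alpha}B)(A+\alpha B)-(A+\alpha B)(A+\overline{\alpha}B)\\
&=(\alpha-\overline{\alpha})(AB-BA).
\end{align*}
Thus $S_\varphi$ is normal if and only if $(\alpha-\overline{\alpha})[A,B]=0$, i.e. if and only if $\alpha\in\R$ or $AB=BA$.

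The ``if'' direction is now immediate: when $\alpha\in\R$ the symbol $\varphi$ is real-valued, so $S_\varphi=S_\varphi^*$ is self-adjoint, hence normal. For the ``only if'' direction it therefore suffices to prove that $A$ and $B$ do \emph{not} commute whenever $m\neq n$ (with $m,n\geqslant1$); together with the displayed identity this forces $\alpha-\overline{\alpha}=0$. I would establish non-commutativity by testing on the vector $f_k(z)=z^{k}\overline{z}-\frac{k}{k+1}z^{k-1}\in(L_h^2)^{\perp}$ from Proposition \ref{2.c}, for some $k>\max\{m,n\}$. Starting from the expression for $S_{z^{n}\overline{z}^{n}}(f_k)$ obtained by setting the two exponents equal in (\ref{eq3}), I would apply $B=S_{z^{m}\overline{z}^{m}}$ to it, computing each $(I-Q)(z^{a}\overline{z}^{b})$ by the projection formula (\ref{I-Q}); performing the same computation with $m$ and $n$ interchanged and subtracting, all the ``top'' terms cancel and one is left, after simplification, with
\begin{align*}
[A,B]f_k&=\frac{nk}{(k+1)(n+k)(n+k+1)}\Big(z^{k+m-1}\overline{z}^{m}-\tfrac{k}{k+m}z^{k-1}\Big)\\
&\quad-\frac{mk}{(k+1)(m+k)(m+k+1)}\Big(z^{k+n-1}\overline{z}^{n}-\tfrac{k}{k+n}z^{k-1}\Big).
\end{align*}
Here the two vectors $z^{k+m-1}\overline{z}^{m}-\tfrac{k}{k+m}z^{k-1}=(I-Q)(z^{k+m-1}\overline{z}^{m})$ and $z^{k+n-1}\overline{z}^{n}-\tfrac{k}{k+n}z^{k-1}=(I-Q)(z^{k+n-1}\overline{z}^{n})$ lie in $(L_h^2)^{\perp}$, and since $m\neq n$ they involve the pairwise distinct monomials $z^{k+m-1}\overline{z}^{m}$, $z^{k+n-1}\overline{z}^{n}$, $z^{k-1}$, hence are linearly independent. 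As $k,m,n\geqslant1$, both scalar coefficients above are nonzero, so $[A,B]f_k\neq0$, i.e. $AB\neq BA$, which completes the argument.

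The only real obstacle is computational: carrying out the second application of a dual Toeplitz operator cleanly, and in particular tracking the analytic pieces spit out by each $I-Q$ step and checking that they cancel when the commutator is formed — which they must, since $[A,B]f_k$ is the image of $f_k$ under an operator on $(L_h^2)^{\perp}$; this cancellation serves as a consistency check on the algebra. One could instead invoke the dual-Toeplitz product identity $S_\psi S_\eta=S_{\psi\eta}-H_\psi H_{\overline{\eta}}^{\,*}$ to write $[A,B]=H_{|z|^{2m}}H_{|z|^{2n}}^{\,*}-H_{|z|^{2n}}H_{|z|^{2m}}^{\,*}$, but verifying that this difference of Hankel products is nonzero looks no simpler than the direct test-vector computation sketched above.
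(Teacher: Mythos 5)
Your proposal is correct and follows essentially the same route as the paper: both reduce normality to $(\alpha-\overline{\alpha})\big[S_{|z|^{2n}},S_{|z|^{2m}}\big]=0$ using the self-adjointness of the two summands, and then exhibit a test vector on which the commutator is nonzero. The only difference is the choice of test vector — you use $f_k=(I-Q)(z^k\overline{z})$ and the formula from Proposition \ref{2.c}, while the paper uses the simpler $f_0=(I-Q)(|z|^2)=|z|^2-\tfrac12$, for which the commutator evaluates to $(I-Q)$ of an explicit non-harmonic combination of $|z|^{2m}$ and $|z|^{2n}$; your computation, which I have checked, reaches the same conclusion.
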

\begin{proof}
If $\alpha\in \mathbb R$, then $S_{\varphi}$ is self-adjoint, and so is normal. Now we assume that $S_{\varphi}$ is normal. It follows that
\begin{align*}
0&=S_{\varphi}^*S_{\varphi}-S_{\varphi}S_{\varphi}^*\\
&=(S_{|z|^{2n}}+\overline{\alpha}S_{|z|^{2m}})(S_{|z|^{2n}}+\alpha S_{|z|^{2m}})-(S_{|z|^{2n}}+\alpha S_{|z|^{2m}})(S_{|z|^{2n}}+\overline{\alpha}S_{|z|^{2m}})\\
&=(\alpha-\overline{\alpha})(S_{|z|^{2n}}S_{|z|^{2m}}-S_{|z|^{2m}}S_{|z|^{2n}}).
\end{align*} For
$$f_0(z)=(I-Q)(|z|^2)=\Big(|z|^2-\frac{1}{2}\Big)\in (L_h^2)^{\bot},$$
we have
\begin{align*}
0&=(\alpha-\overline{\alpha})(S_{|z|^{2n}}S_{|z|^{2m}}-S_{|z|^{2m}}S_{|z|^{2n}})(f_0)\\
&=(\alpha-\overline{\alpha})\Big\{(I-Q)\left[|z|^{2n}(I-Q)(|z|^{2m}f_0)\right]-(I-Q)\left[|z|^{2m}(I-Q)(|z|^{2n}f_0)\right]\Big\}\\
&=(\alpha-\overline{\alpha})(I-Q)\left[|z|^{2m}Q(|z|^{2n}f_0)-|z|^{2n}Q(|z|^{2m}f_0)\right].
\end{align*}
On the one hand,
\begin{align*}
|z|^{2m}Q(|z|^{2n}f_0)-|z|^{2n}Q(|z|^{2m}f_0)&= |z|^{2m}Q(|z|^{2(n+1)}-\frac{1}{2}|z|^{2n})-|z|^{2n}Q(|z|^{2(m+1)}-\frac{1}{2}|z|^{2m})\\
&=|z|^{2m}\left(\frac{1}{n+2}-\frac{1}{2(n+1)}\right)-|z|^{2n}\left(\frac{1}{m+2}-\frac{1}{2(m+1)}\right)\\
&=\frac{n|z|^{2m}}{2(n+1)(n+2)}-\frac{m|z|^{2n}}{2(m+1)(m+2)}.
\end{align*}
On the other hand, we have
\begin{align*}
Q\left[|z|^{2m}Q(|z|^{2n}f_0)-|z|^{2n}Q(|z|^{2m}f_0)\right]&=Q\left[\frac{n|z|^{2m}}{2(n+1)(n+2)}-\frac{m|z|^{2n}}{2(m+1)(m+2)}\right]\\
&=\frac{n}{2(n+1)(n+2)(m+1)}-\frac{m}{2(m+1)(m+2)(n+1)}\\
&=\frac{n-m}{(n+1)(n+2)(m+1)(m+2)}.
\end{align*}
Since $m\neq n$, we derive  that $$(I-Q)\big[|z|^{2m}Q(|z|^{2n}f_0)-|z|^{2n}Q(|z|^{2m}f_0)\big]\neq 0,$$
to obtain $\alpha=\overline{\alpha}$. This completes the proof of Lemma \ref{2.i}.
\end{proof}

We are now in  position to prove Theorem \ref{M2}. Let $\varphi(z)=az^{n_1}\overline{z}^{m_1}+bz^{n_2}\overline{z}^{m_2}$. Recall that we need to show that $S_{\varphi}$ is is normal on $(L_h^2)^{\perp}$ if and only if one of the following holds:\\
$\mathrm{(1)}$ $n_1=m_1=n_2=m_2$;\\
$\mathrm{(2)}$ $n_1=m_1, n_2=m_2, n_1\neq n_2$ and $\arg(a)=\arg(b)$;\\
$\mathrm{(3)}$ $n_1=m_2, n_2=m_1, n_1\neq m_1$ and $|a|=|b|$.
\begin{proof}[\emph{\textbf{\emph{Proof of  Theorem \ref{M2}.}}}]
Suppose that $(1)$ or $(2)$ holds.  We conclude that $S_{\varphi}$ is normal by Proposition \ref{2.c} and Lemma \ref{2.i},
respectively. If $(3)$ holds, then there exists $\theta\in [0,2\pi]$ such that $b=ae^{i\theta}$. In this case, we have
\begin{align*}
S_{\varphi}^*S_{\varphi}-S_{\varphi}S_{\varphi}^*&=|a|^2(S_{\overline{z}^{n_1}z^{m_1}}+e^{-i\theta}S_{z^{n_1}\overline{z}^{m_1}})
(S_{z^{n_1}\overline{z}^{m_1}}+e^{i\theta}S_{\overline{z}^{n_1}z^{m_1}})\\
&\quad-|a|^2(S_{z^{n_1}\overline{z}^{m_1}}+e^{i\theta}S_{\overline{z}^{n_1}z^{m_1}})
(S_{\overline{z}^{n_1}z^{m_1}}+e^{-i\theta}S_{z^{n_1}\overline{z}^{m_1}})\\
&=|a|^2e^{-i\theta}(e^{i\theta}S_{\overline{z}^{n_1}z^{m_1}}+S_{z^{n_1}\overline{z}^{m_1}})
(e^{-i\theta}S_{z^{n_1}\overline{z}^{m_1}}+S_{\overline{z}^{n_1}z^{m_1}})e^{i\theta}\\
&\quad-|a|^2(S_{z^{n_1}\overline{z}^{m_1}}+e^{i\theta}S_{\overline{z}^{n_1}z^{m_1}})
(S_{\overline{z}^{n_1}z^{m_1}}+e^{-i\theta}S_{z^{n_1}\overline{z}^{m_1}})\\
&=0,
\end{align*}
 which implies that $S_{\varphi}$ is a normal operator.

The necessary part of the theorem follows from Lemmas \ref{2.g}, \ref{2.h} and \ref{2.i} immediately. This completes the proof of Theorem \ref{M2}.
\end{proof}

\section{Finite rank commutator}\label{4}
In the last section, we will study the rank of the commutator of two dual Toeplitz operators on $(L_h^2)^\perp$. For $f,g\in (L_h^2)^{\perp}$, recall that the rank-one operator $f\otimes g$ on $(L_h^2)^{\perp}$ is defined by
$$(f\otimes g)(h)=\langle h,g\rangle f,\ \ \ \  h\in (L_h^2)^{\perp}.$$
Our main result of this section is the following theorem, which is analogous to the characterization for finite rank commutator of Toeplitz operators on the harmonic Bergman space \cite[Theorem 2.2]{CKL}.
\begin{thm}\label{M3}
Let $\varphi$ and $\psi$ be in $L^{\infty}(\D)$. If the commutator
$$[S_{\varphi},S_{\psi}]=S_{\varphi}S_{\psi}-S_{\psi}S_{\varphi}$$
has a finite rank, then the rank  of $[S_{\varphi},S_{\psi}]$ is an even number.
\end{thm}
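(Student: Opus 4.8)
The plan is to exploit the conjugation symmetry that already powered the proof of Theorem \ref{2.b}. Recall from that proof that for any $\varphi\in L^\infty(\D)$ and any $f\in(L_h^2)^\perp$ one has the identities $\overline{Q(\varphi f)}=Q(\overline\varphi\,\overline f)$, hence $\overline{S_\varphi f}=S_{\overline\varphi}\overline f=S_\varphi^*\overline f$. Introduce the conjugation $C\colon(L_h^2)^\perp\to(L_h^2)^\perp$ defined by $Cf=\overline f$; it is a conjugate-linear, isometric involution (it maps $(L_h^2)^\perp$ into itself because the space is conjugation-invariant). The relation above says precisely $CS_\varphi C=S_\varphi^*$ for every symbol, and consequently $CS_\varphi^*C=S_\varphi$. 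Therefore, for any two symbols $\varphi,\psi$,
\begin{align*}
C\,[S_\varphi,S_\psi]\,C
&=C(S_\varphi S_\psi-S_\psi S_\varphi)C
=(CS_\varphi C)(CS_\psi C)-(CS_\psi C)(CS_\varphi C)\\
&=S_\varphi^*S_\psi^*-S_\psi^*S_\varphi^*
=(S_\psi S_\varphi-S_\varphi S_\psi)^*
=-[S_\varphi,S_\psi]^*.
\end{align*}
So, writing $K=[S_\varphi,S_\psi]$, we have the clean intertwining relation $CKC=-K^*$.

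Next I would pass this relation to the finite-rank hypothesis. Suppose $K$ has finite rank $r$. Since $C$ is an (antilinear) invertible isometry, $CKC$ and $K$ have the same rank, and $K^*$ has the same rank as $K$; so everything is consistent and $\operatorname{rank}K=\operatorname{rank}K^*=\operatorname{rank}(CKC)=r$. The substantive step is to analyze the structure of a finite-rank operator satisfying $CKC=-K^*$. Write $K=\sum_{j=1}^r u_j\otimes v_j$ with $\{u_j\}$ linearly independent and $\{v_j\}$ linearly independent (a minimal representation, so both families span subspaces of dimension $r$, namely the range of $K$ and the range of $K^*$). The key observation is that the operator $CKC$ equals $\sum_j (Cu_j)\otimes(Cv_j)$ up to the antilinearity bookkeeping: for $h\in(L_h^2)^\perp$,
$$
(CKC)h=C\Big(\sum_j\langle Ch,v_j\rangle u_j\Big)=\sum_j\overline{\langle Ch,v_j\rangle}\,Cu_j
=\sum_j\langle h,Cv_j\rangle\,Cu_j=\Big(\sum_j (Cu_j)\otimes(Cv_j)\Big)h,
$$
using $\langle Ch,v_j\rangle=\overline{\langle h,Cv_j\rangle}$. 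Meanwhile $K^*=\sum_j v_j\otimes u_j$, so the relation $CKC=-K^*$ becomes $\sum_j (Cu_j)\otimes(Cv_j)=-\sum_j v_j\otimes u_j$. In particular $C$ maps $\operatorname{ran}K=\operatorname{span}\{u_j\}$ onto $\operatorname{span}\{v_j\}=\operatorname{ran}K^*=(\ker K)^\perp$, and vice versa.

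The last and decisive step is a linear-algebra argument on the $r$-dimensional space $V:=\operatorname{ran}K+\operatorname{ran}K^*$, which is $C$-invariant. Restrict attention to the compression: $K$ maps $\operatorname{ran}K^*$ into $\operatorname{ran}K$ and is zero on $(\operatorname{ran}K^*)^\perp$, so $K$ is determined by the invertible linear map $\widetilde K\colon\operatorname{ran}K^*\to\operatorname{ran}K$ (both $r$-dimensional). From $CKC=-K^*$ one gets, on the appropriate subspaces, a conjugate-linear identification turning $\widetilde K$ into $-\widetilde K^{\,*}$ (an ``antilinearly skew-adjoint'' object): concretely, picking a basis and using $C$ to identify $\operatorname{ran}K^*$ with the conjugate of $\operatorname{ran}K$, the matrix $M$ of $\widetilde K$ satisfies $\overline M = -M^{\mathsf T}$ in suitable coordinates, i.e. $M$ is congruent to a skew-symmetric matrix (equivalently, $iM$ is congruent to a symmetric... one must be careful, but the upshot is an alternating form). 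An invertible alternating bilinear form exists only on an even-dimensional space; since $M$ is an invertible $r\times r$ matrix of this type, $r$ must be even. I expect the main obstacle to be exactly this final step: making the passage from the operator identity $CKC=-K^*$ to a genuine nondegenerate alternating form precise, keeping track of the antilinearity of $C$ and choosing coordinates so that the ``skew'' (as opposed to merely ``skew-Hermitian'') character is visible — because a nondegenerate skew-\emph{Hermitian} form exists in every dimension, so one really needs the bilinear (conjugation-twisted) version to force evenness. Once that is set up cleanly the parity conclusion is immediate.
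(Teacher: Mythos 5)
Your setup coincides exactly with the paper's: the conjugation $Cf=\overline f$ (the paper calls it $U$), the intertwining relation $CKC=-K^{*}$ for $K=[S_\varphi,S_\psi]$, the minimal rank-one decomposition $K=\sum_{j=1}^{r}u_j\otimes v_j$, and the resulting identity $\sum_{j}(Cu_j)\otimes(Cv_j)=-\sum_{j}v_j\otimes u_j$ are all correct and are precisely the published argument. The gap is exactly where you say you expect it: you never actually extract the nondegenerate alternating form, and the relation you tentatively write for the connecting matrix, $\overline M=-M^{\mathsf T}$, is the wrong one --- that is skew-Hermitian, which (as you yourself note) admits nondegenerate instances in every dimension and therefore proves nothing about parity. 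As written, the proof does not close.

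The missing step is short, and the antilinearity you are worried about is what saves you: it enters twice and cancels. The two sides of $\sum_{j}(Cu_j)\otimes(Cv_j)=-\sum_{j}v_j\otimes u_j$ have ranges $\mathrm{span}\{Cu_j\}$ and $\mathrm{span}\{v_j\}$ respectively, so these spans coincide and one may write $v_i=\sum_{j}a_{ij}\,Cu_j$ with $A=(a_{ij})$ invertible (both families are bases of the same $r$-dimensional space). Since $f\otimes g$ is conjugate-linear in $g$, substituting gives
$$-\sum_i v_i\otimes u_i=-\sum_j (Cu_j)\otimes\Bigl(\sum_i \overline{a_{ij}}\,u_i\Bigr),$$
and comparing with $\sum_j (Cu_j)\otimes(Cv_j)$, using the linear independence of the $Cu_j$, yields $Cv_j=-\sum_i\overline{a_{ij}}\,u_i$; applying $C$ once more gives $v_j=-\sum_i a_{ij}\,Cu_i$. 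Comparing with $v_j=\sum_i a_{ji}\,Cu_i$ forces $a_{ij}=-a_{ji}$ with \emph{no} complex conjugate: $A$ is genuinely skew-symmetric and invertible, whence $\det A=\det(-A^{\mathsf T})=(-1)^{r}\det A\neq 0$ and $r$ is even. This determinant computation is exactly how the paper concludes.
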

\begin{proof}
Suppose that the rank of $[S_{\varphi},S_{\psi}]$ is $n$. Then we can find two linearly independent sequences $\left\{s_i\right\}_{i=1}^{n}$ and
$\left\{t_i\right\}_{i=1}^{n}$ in $\left(L_h^2\right)^{\perp}$, which satisfy that
$$[S_{\varphi},S_{\psi}]=\sum_{i=1}^{n}s_i\otimes t_i.$$
Define an anti-unitary operator $U$ on $\left(L_h^2\right)^{\perp}$ as the following:
$$Uf=\overline{f},    \ \ \  f\in\left(L_h^2\right)^{\perp}.$$
Noting that $(I-Q)(\overline{f})=\overline{(I-Q)(f)}$ for any $f\in L^2(\D)$,  we have
\begin{align*}
(S_{\varphi}S_{\psi}-S_{\psi}S_{\varphi})^*(h)&= (S_{\overline{\psi}}S_{\overline{\varphi}}-S_{\overline{\varphi}}S_{\overline{\psi}})(h)\\
&= \overline{(S_{\psi}S_{\varphi}-S_{\varphi}S_{\psi})(\overline{h})}\\
&= U(S_{\psi}S_{\varphi}-S_{\varphi}S_{\psi})U(h)\\
&= -U[S_{\varphi},S_{\psi}]U(h)
\end{align*}
for each $h\in \left(L_h^2\right)^{\perp}$.
This gives that
$$\left(\sum_{i=1}^{n}s_i\otimes t_i\right)^*=-U\left(\sum_{i=1}^{n}s_i\otimes t_i\right)U,$$
which is equivalent to
$$\sum_{i=1}^{n}t_i\otimes s_i=-\sum_{i=1}^{n}\overline{s_i}\otimes \overline{t_i}.$$
Thus we can find $\{a_{ij}\}_{j=1}^n\subset \C$ such that
$$t_i=\sum_{j=1}^{n}a_{ij}\overline{s_j}$$
for each $t_i$. Then we have
\begin{align*}
\sum_{i=1}^{n}t_i\otimes s_i&=\sum_{i=1}^{n}(\sum_{j=1}^{n}a_{ij}\overline{s_j})\otimes s_i\\
&= \sum_{i=1}^{n}\sum_{j=1}^{n}a_{ij}\overline{s_j}\otimes s_i\\
&= \sum_{j=1}\overline{s_j}\otimes \sum_{i=1}^{n}\overline{a_{ij}}s_i\\
&= -\sum_{i=1}^{n}\overline{s_i}\otimes \overline{t_i}.
\end{align*}
It follows that
$$\overline{t_i}=-\sum_{i=1}^{n}\overline{a_{ji}}s_j,$$
to obtain $$t_i=-\sum\limits_{i=1}^{n}a_{ji}\overline{s_j}=\sum\limits_{j=1}^{n}a_{ij}\overline{s_j}.$$

Since $\left\{s_1, s_2, \cdots, s_n\right\}$ is
linearly independent, we conclude that
$$a_{ij}=-a_{ji}.$$
Denote the matrix $(a_{ij})_{n\times n}$ by $A$, then $\det(A)\neq 0$. Hence $A=-A^T$ and
$$\det(A)=(-1)^n\cdot\det(A^T)=(-1)^n\cdot\det(A),$$
where $A^T$ is the transpose of the matrix $A$. This implies that $(-1)^n=1$ as $\det(A)\neq 0$, so $n$ must be even. This completes the proof of Theorem \ref{M3}.
\end{proof}\vspace{5mm}

\subsection*{Acknowledgment}
This work was partially supported by  NSFC (grant number: 11701052). The second author was partially supported by the Fundamental Research Funds for the Central Universities (grant numbers: 2020CDJQY-A039, 2020CDJ-LHSS-003).

\end{document}